\newtheorem{theorem}{Theorem}
\newtheorem{definition}[theorem]{Definition}
\newtheorem{lemma}[theorem]{Lemma}
\newtheorem{proposition}[theorem]{Proposition}
\newcommand{\skipifemptyarg}[1]{\ifthenelse{\isempty{#1}}{}{\left[#1\right]}}
\newcommand{\skipifscalar}[1]{\ifthenelse{\isempty{#1}}{}{;#1}}
\newcommand{\Lref}[1]{Lemma~\ref{#1}}
\newcommand{\Aref}[1]{Appendix~\ref{#1}}
\newcommand{\Dref}[1]{Definition~\ref{#1}}
\newcommand{\Pref}[1]{Proposition~\ref{#1}}
\newcommand{\Sref}[1]{Section~\ref{#1}}
\newcommand{\Fref}[1]{Figure~\ref{#1}}
\newcommand{\set}[1]{\mathbb{#1}}
\newcommand{\T}[1]{{\boldsymbol #1}}
\newcommand{\de}[1]{\,{\mathrm d}#1}
\newcommand{\refe}{^{(0)}}
\newcommand{\puc}{\mathcal{\lpuc}}
\newcommand{\lpuc}{Y}
\newcommand{\R}{\set{R}}
\newcommand{\C}{\set{C}}
\newcommand{\Z}{\set{Z}}
\newcommand{\X}{\set{X}}
\newcommand{\Zd}{\set{Z}^\dime}
\newcommand{\ZN}{\set{Z}_\TN}
\newcommand{\ZNd}{\set{Z}^d_\TN}
\newcommand{\Te}{\T{e}}
\newcommand{\Tefl}{\tilde{\T{e}}}
\newcommand{\ufl}{\tilde{u}}
\newcommand{\Tj}{\T{j}}
\newcommand{\TA}{\T{A}}
\newcommand{\TAref}{\T{A}\refe}
\newcommand{\TGref}{\T{\Gamma}\refe}
\newcommand{\ThGref}{\FT{\T{\Gamma}}\refe}
\newcommand{\Tu}{\T{u}}
\newcommand{\TuN}{\T{u}_\TN}
\newcommand{\TvN}{\T{v}_\TN}
\newcommand{\Tv}{\T{v}}
\newcommand{\Tf}{\T{f}}
\newcommand{\Tg}{\T{g}}
\newcommand{\TE}{\T{E}}
\newcommand{\x}{\T{x}}
\newcommand{\y}{\T{y}}
\renewcommand{\k}{\T{k}}
\newcommand{\m}{\T{m}}
\newcommand{\Tl}{\T{l}}
\newcommand{\xk}{\x^\k_\TN}
\newcommand{\xm}{\x^\m_\TN}
\newcommand{\Tnabla}{\T{\nabla}}
\newcommand{\bform}[2]{a(#1, #2)}
\newcommand{\bformN}[2]{a_\TN(#1,#2)}
\newcommand{\lform}[1]{l(#1)}
\newcommand{\lformN}[1]{l_\TN(#1)}
\newcommand{\dime}{d}
\newcommand{\E}{\mathscr{E}} 
\newcommand{\J}{\mathscr{J}}
\newcommand{\U}{\mathscr{U}}
\newcommand{\Lper}[2]{L^{#1}_\#(\puc\skipifscalar{#2})}
\newcommand{\Cper}[2]{C^{#1}_\#(\puc\skipifscalar{#2})}
\newcommand{\Wper}[3]{W^{#1,#2}_\#(\puc\skipifscalar{#3})}
\newcommand{\Hper}[2]{H^{#1}_\#(\puc\skipifscalar{#2})}
\newcommand{\FT}[1]{\widehat{#1}}
\newcommand{\meas}[1]{|#1|}
\newcommand{\bfun}[1]{\varphi_{#1}}
\newcommand{\bfunN}[1]{\bfun{\T{N},#1}}
\newcommand{\bfunDFT}[2]{\omega_\TN^{#1#2}}
\newcommand{\alp}{\alpha}
\newcommand{\bet}{\beta}
\newcommand{\imu}{\mathrm{i}}		
\newcommand{\Tz}{\T{\xi}}
\newcommand{\cA}{c_A}
\newcommand{\ch}{c_h}
\newcommand{\CA}{C_A}
\newcommand{\Ch}{C_h}
\newcommand{\rA}{\rho_A}
\newcommand{\rh}{\rho_h}
\newcommand{\OGp}[1]{\mathcal{G}\skipifemptyarg{#1}}
\newcommand{\OG}[1]{\mathcal{G}\refe\skipifemptyarg{#1}}
\newcommand{\OGadj}[1]{\mathcal{G}^{(0)\dagger}\skipifemptyarg{#1}}
\newcommand{\sprod}[3]{\bigl(#1, #2\bigr)_{#3}}
\newcommand{\norm}[2]{\bigl\| #1 \bigr\|_{#2}}
\newcommand{\normB}[2]{\Bigl\| #1 \Bigr\|_{#2}}
\newcommand{\conj}[1]{\overline{#1}}
\newcommand{\ZdmO}{\Z^\dime \backslash \{\T{0}\}}
\newcommand{\NOmO}{\set{N}^{\dime}_{0} \backslash \{\T{0}\}}
\newcommand{\z}{\xi}
\newcommand{\D}[1]{\partial_{#1}}
\newcommand{\TN}{\T{N}}
\newcommand{\TGE}{\tilde{\Te}^\mathrm{Ga}_\TN}
\newcommand{\TGEi}{\tilde{\Te}_{\TN}}
\newcommand{\PN}[1]{\mathcal{P}_\TN\skipifemptyarg{#1}}
\newcommand{\QN}[1]{\mathcal{Q}_\TN\skipifemptyarg{#1}}
\newcommand{\trn}{^\mathsf{T}}
\newcommand{\TrigN}{\mathscr{T}_\TN^{\dime}}
\newcommand{\EN}{\E_\TN}
\newcommand{\UN}{\U_\TN}
\newcommand{\JN}{\J_\TN}
\newcommand{\IN}[1]{\mathcal{I}_{\TN}\skipifemptyarg{#1}}
\newcommand{\n}{\ensuremath{{\T{n}}}}
\newcommand{\Txi}{\ensuremath{\T{\xi}}}
\newcommand{\del}{\ensuremath{\delta}}
\newcommand{\xRd}{\set{R}^{\dime}}
\newcommand{\xRdd}{\set{R}^{\dime\times \dime}}
\newcommand{\xCd}{\set{C}^{\dime}}
\newcommand{\xCdd}{\set{C}^{\dime\times \dime}}
\newcommand{\xXN}{\xRdN}
\newcommand{\xMN}{\bigl[\R^{d\times\TN}\bigr]^2}
\newcommand{\xhMN}{\bigl[\C^{d\times\TN}\bigr]^2}
\newcommand{\xRdN}{\set{R}^{\dime\times \TN}}
\newcommand{\xUN}{\set{U}_\TN}
\newcommand{\xEN}{\set{E}_\TN}
\newcommand{\xJN}{\set{J}_\TN}
\newcommand{\oplusp}{\oplus}
\newcommand{\scal}[2]{\bigl(#1,#2\bigr)}
\newcommand{\M}[1]{\mathsf{#1}} 
\newcommand{\MB}[1]{\boldsymbol{\M{#1}}}
\newcommand{\MBA}{\ensuremath{\boldsymbol{\M{A}}}}
\newcommand{\MBF}{\ensuremath{\boldsymbol{\M{F}}}}
\newcommand{\MBFi}{\ensuremath{\boldsymbol{\M{F}}^{-1}}}
\newcommand{\MBe}{\ensuremath{\boldsymbol{\M{e}}}}
\newcommand{\MBefl}{\tilde{\MBe}}
\newcommand{\MBE}{\ensuremath{\boldsymbol{\M{E}}}}
\newcommand{\MBu}{\ensuremath{\boldsymbol{\M{u}}}}
\newcommand{\MBv}{\ensuremath{\boldsymbol{\M{v}}}}
\newcommand{\MBG}{\ensuremath{{\boldsymbol{\M{G}}}}}
\newcommand{\DFT}{\MBF_\TN}
\newcommand{\iDFT}{\MBFi_\TN}
\newcommand{\MBI}{\boldsymbol{\M{I}}}
\newcommand{\MBGamm}{\MB{\Gamma}_\TN\refe}
\newcommand{\FTMBGamm}{\FT{\MB{\Gamma}}_\TN\refe}
\newcommand{\ul}[1]{{\underline{#1}}}
\newcommand{\OGD}{\MBG^{(0)}_\TN}
\newcommand{\OGDadj}{\MBG^{(0)\dagger}_\TN}
\newcommand{\OGDo}{\MBG_\TN}
\newcommand{\DQf}[1]{D^{\step}_{#1}} 
\newcommand{\DQb}[1]{D^{-\step}_{#1}} 
\newcommand{\step}{\triangle}
\DeclareMathOperator{\esssup}{ess\,sup}
\newcommand{\Tunv}[1]{\T{\epsilon}^{#1}}
\journal{Computers $\&$ Mathematics with Applications}
\begin{document}

\begin{frontmatter}



\title{An FFT-based Galerkin Method for Homogenization of Periodic Media\tnoteref{t1}}
\tnotetext[t1]{
This work was supported by the Czech Science Foundation through project  No.~P105/12/0331.  
}

\author[label1]{Jaroslav Vond\v{r}ejc\corref{cor1}}\ead{vondrejc@gmail.com}
\author[label2,label3]{Jan Zeman}\ead{
zemanj@cml.fsv.cvut.cz}
\author[label4]{Ivo Marek}\ead{ marekivo@mat.fsv.cvut.cz}

\cortext[cor1]{Corresponding author}

\address[label1]{New Technologies for the Information Society, Faculty of Applied Sciences, University of West Bohemia, Univerzitn\'{i} 2732/8, 306 14 Plze\v{n}, Czech Republic. Support was received from the project EXLIZ -- CZ.1.07/2.3.00/30.0013, which is
co-financed by the European Social Fund and the state budget of the Czech
Republic.}

\address[label2]{Department of Mechanics, Faculty of Civil Engineering, Czech Technical  University in Prague, Th\'{a}kurova 7, 166 29 Prague 6, Czech Republic.}

\address[label3]{Centre of Excellence IT4Innovations, V\v{S}B-TU Ostrava, 17.~listopadu 15/2172, 708 33 Ostrava-Poruba, Czech Republic. Support was received from the European Regional Development Fund under the IT4Innovations Centre of Excellence, project
No.~CZ.1.05/1.1.00/02.0070.}

\address[label4]{Department of Mathematics, Faculty of Civil Engineering, Czech Technical University in Prague, Th\'{a}kurova 7, 166 29 Prague 6, Czech Republic.}
 
\begin{abstract}
In 1994, Moulinec and Suquet introduced an efficient technique for the numerical resolution of the cell problem arising in homogenization of periodic media. 
The scheme is based on a fixed-point iterative solution to an integral equation of the Lippmann-Schwinger type, with action of its kernel efficiently evaluated by the Fast Fourier Transform techniques. The aim of this work is to demonstrate that the Moulinec-Suquet setting is actually equivalent to a Galerkin discretization of the cell problem, based on approximation spaces spanned by trigonometric polynomials and a suitable numerical integration scheme.
For the latter framework and scalar elliptic setting, we prove convergence of the approximate solution to the weak solution, including a-priori estimates for the rate of convergence for sufficiently regular data and the effects of numerical integration.
Moreover, we also show that the variational structure implies that the resulting non-symmetric system of linear equations can be solved by the conjugate gradient method. Apart from providing a theoretical support to Fast Fourier Transform-based methods for numerical homogenization, these findings significantly improve on the performance of the original solver and pave the way to similar developments for its many generalizations proposed in the literature.
\end{abstract}

\begin{keyword}
Galerkin approximation \sep Heterogeneous media \sep Numerical homogenization \sep Fourier Transform \sep Trigonometric polynomials \sep Conjugate gradients
\MSC 35B27 \sep 65N30 \sep 65N12 \sep 65T40
\end{keyword}

\end{frontmatter}

\section{Introduction}\label{sec:Introduction}

In homogenization theories for periodic media, a key role is played by 
the so-called cell problem, whose structure is, in the scalar setting,
given by~\cite[Section 2.1]{Milton:2002:TTC}
\nomenclature{$\R$}{Set of real numbers}%
\nomenclature{$\dime$}{Dimension of the problem}%
\nomenclature{$\puc$}{Periodic unit cell}%

\begin{align}\label{eq:unit_cell_problem}
\Tnabla \times \Te(\x) = \T{0}, &&
\Tnabla \cdot \Tj(\x) = \T{0}, &&
\Tj(\x) = \TA(\x) \Te(\x) 
\mbox{ for }
\x \in \puc.
\end{align}
Here, $\puc \subset \xRd$ refers to the cell characterizing the microstructure
of a composite material, $\Te : \puc \rightarrow \xRd$ is the gradient field,
$\Tj : \puc \rightarrow \xRd$ denotes the flux field and the tensor
field $\TA : \puc \rightarrow \xRdd$ collects material coefficients; all
three fields must be $\puc$-periodic.
\nomenclature{$\T{a}$}{Tensor quantity}%
\nomenclature{$\T{A}$}{Second-order tensor}%
\nomenclature{$\x$}{Position vector}%
\nomenclature{$\Te$}{Gradient field}%
\nomenclature{$\Tj$}{Flux field}%
\nomenclature{$\TA$}{Coefficients}%
\nomenclature{$\Tnabla$}{Differential operator}%
The gradient field is further decomposed to
\begin{align}\label{eq:grad_field_split}
\Te(\x)
=
\TE
+
\Tefl(\x)
\mbox{ for }
\x \in \puc,
&&
\int_{\puc}
\Tefl(\x)
\de \x
=
\T{0},
\end{align}
so that $\Tefl : \puc \rightarrow \xRd$ represents the fluctuating part and $\TE
\in \set{R}^\dime$ stands for the mean applied gradient.
\nomenclature{$a\flc$}{Fluctuating part of $a$}%
\nomenclature{$\Tefl$}{Fluctuating gradient field}%
\nomenclature{$\TE$}{Macroscopic (average) gradient value}%
The usual route to solve the cell problem is to convert it
to the divergence form, by introducing an $\puc$-periodic potential $\ufl : \puc
\rightarrow \R$ satisfying $\Tefl = - \nabla \ufl$, and to obtain an
approximate solution by the finite element method, see
e.g.~\cite{Geers:2010:MSC} for a recent overview. However, this may become
computationally prohibitive, for example when the material coefficients are
defined by large data sets produced by high-resolution imaging techniques.

Exactly with such applications in mind, Moulinec and Suquet have introduced
an efficient iterative solver for problems with coefficients $\TA$ defined on a
regular grid~\cite{Moulinec:1994:FNMC}. It is based on the reformulation of the
cell problem as an integral equation of the Lippmann-Schwinger type
\begin{equation}\label{eq:Lippman-Schwinger}
\Te(\x)
+
\int_{\puc}
\TGref(\x - \y)
\bigl( \TA(\y) - \TAref\bigr)
\Te(\y)
\de\y
=
\TE
\mbox{ for }
\x \in \puc,
\end{equation}
\nomenclature{$a\refe$}{Value related to the auxiliary reference problem}%
\nomenclature{$\TGref$}{Fundamental operator}%
\nomenclature{$\TAref$}{Auxiliary data of the problem}%
where $\TGref : \puc \rightarrow \xRdd$ is the Green operator related to an
auxiliary cell problem with $\TA(\x) = \TAref \in \xRdd$, conveniently expressed
in the Fourier domain, cf.~\eqref{eq:Gamma_action}. The numerical resolution
of~\eqref{eq:Lippman-Schwinger} is based on fixed-point iterations, with the
action of $\TGref$ efficiently evaluated using the Fast Fourier Transform
techniques. The later study~\cite{Eyre:1999:FNS} revealed that the convergence
of the scheme depends on a particular choice of $\TAref$, and that the
number of iterations needed to achieve a fixed accuracy grows linearly with
the contrast in coefficients of~$\TA$~(see ahead to~\eqref{eq:A2} on
page~\pageref{eq:A2} for a precise specification).

To overcome these difficulties, several generalizations of the basic scheme have
been proposed. Eyre and Milton developed in~\cite{Eyre:1999:FNS} an accelerated
fixed-point scheme derived from a modified integral equation, possibly combined
with a multi-grid technique. The scheme was successfully extended to non-linear
problems in~\cite{vinogradov_accelerated_2008}. Another improvement due to
Michel et al.~\cite{Michel:2000:CMB,Michel:2001:CSL} is based on an equivalent
saddle-point formulation solved by the Augmented Lagrangian method, which
performs well even for the infinite-contrast case. It has also been recognized
that the original formulation~\cite{Moulinec:1994:FNMC} is equivalent to a
system of linear equations arising from a suitable discretization procedure. In
particular, Brisard and Dormieux~\cite{Brisard:2010:FFT} presented an algorithm
based on the discretization of the Hashin-Shtrikman energy functional by the
Galerkin method and studied its convergence later in~\cite{Brisard:2012:CGA}.
The approach adopted by Zeman et al.~\cite{ZeVoNoMa:2010:AFFTH} rests on the
discretization of the Lippmann-Schwinger equation~\eqref{eq:Lippman-Schwinger}
by a collocation argument; extension of this technique to the non-linear regime
has been presented by G\'{e}l\'{e}bart and Mondon-Cance
in~\cite{Gelebart:2013:NLE}. The most recent contributions include the
primal-dual formulation by Monchiet and
Bonnet~\cite{monchiet_polarization-based_2012}, and a scheme suitable for
highly-contrasted media due to Willot et al.~\cite{Willot:2013:FBS}, that is
based on a modified kernel~$\TGref$.
 
Apart from the development of more robust solvers, considerable effort has been directed towards generalizations beyond the linear setting, as well as
towards applications to real-world materials.  Such extensions were
successfully accomplished for, e.g., small-strain
elasto-plasticity~\cite{Moulinec:1998:NMC}, homogenization of shape-memory
materials~\cite{Bhattacharya:2005:MPC}, stochastic elliptic
problems~\cite{Xu:2005:SCM}, coupled multi-physics
phenomena~\cite{Brenner:2010:CAC}, or non-local damage models for quasi-brittle
materials~\cite{Li:2012:DMC}. As for the material-specific studies, these
include simulations of microstructure coarsening in tin-lead
solders~\cite{dreyer_study_2000}, modeling of
elastic~\cite{Smilauer:2006:MBM} and visco-elastic~\cite{Smilauer:2010:IVB}
response of hydrating cement pastes, full-field simulations of polycrystalline
materials~\cite{Lebensohn:2011:FFT}, multi-scale predictions for mechanical
response of multi-functional superalloys~\cite{Gaubert:2010:CPF},
ice~\cite{Montagnat:2013:MMI}, response of steels under cyclic
loading~\cite{Mareau:2013:ENS}, or high-performance cementitious
composites~\cite{daSilva:2014:MNA}, and the list is far from complete.

The present work is motivated by computational observations reported earlier by
the authors in~\cite{ZeVoNoMa:2010:AFFTH}. These are related to the
discretization of the Lippmann-Schwinger equation by the trigonometric
collocation method due to Vainikko~\cite{Vainikko:2000:FSLS}, which
consists of an expansion of the solution in terms of trigonometric
polynomials and enforcing~\eqref{eq:Lippman-Schwinger} discretely at the grid points. Such procedure results in a non-symmetric system of linear equations, which is expressed as the product of sparse structured matrices, equivalent to the original Moulinec-Suquet method~\cite{Moulinec:1994:FNMC}.
Quite surprisingly, we have observed that the system can be solved by the
standard Conjugate Gradient algorithm applicable to symmetric positive-definite
systems. Moreover, the convergence of the algorithm is independent of the
choice of $\TAref$, and the number of iterations to achieve a fixed accuracy
scales up with the square root of the contrast in coefficients $\TA$.

The goal of this paper is to demonstrate that our previous results, among
others, can be explained by recognizing that the original Moulinec-Suquet
setting is in fact equivalent to the Galerkin discretization of the weak form of
the cell problem, with approximation spaces spanned by trigonometric
polynomials. To this purpose, after presenting the adopted notation and the
necessary function spaces, in~\Sref{sec:weak} we define the weak form of the
cell problem and demonstrate its equivalence to the Lippmann-Schwinger equation
by means of a projection operator reflecting the structure
of~\eqref{eq:unit_cell_problem}. The Galerkin discretization is treated in
detail in~\Sref{sec:Discretization}, with emphasis on convergence of approximate
solutions and on the effect of numerical integration. In
\Sref{sec:Algebraic_system}, we study the properties of the system of linear
equations arising from the discretization procedure, and their relation to the
computational experiments mentioned above. \Sref{sec:conclusions} summarizes
obtained results and outlines a number of possible extensions, whilst in
\Sref{sec:Brisard} we compare outcomes of this work with the study by Brisard
and Dormieux~\cite{Brisard:2012:CGA}. Finally, in
\ref{app:trigonometric} and~\ref{app:regularity} we gather
technicalities related to approximation properties of trigonometric polynomials
and regularity of the weak solution, in order to make the paper self-contained.

\section{Notation and preliminaries}

The goal of this section is to introduce the notation and,
following~\cite{Evans:2000:PDE,Saranen:2002:PIP},
summarize the basic facts concerning the function spaces and Fourier
transform techniques utilized in the remainder of the paper.

Vectors and second-order tensors are denoted by boldface letters, e.g.
$\Tv \in \xRd$ or $\T{M} \in \xRdd$, with Greek letters used when referring to
their entries, e.g. $\T{M} = (M_{\alp\beta})_{\alp,\beta=1,\ldots,\dime}$. 
As usual, $\T{M} \Tv$, $\Tu \cdot \Tv$, and $\Tu \otimes \Tv$ refer to
\begin{align*}
\T{M} \Tv &= (M_{\alp\beta} v_\beta)_\alp, 
&
\Tu \cdot \Tv &= u_\alp v_\alp,
&
\Tu \otimes \Tv
&=
(u_\alp v_\beta)_{\alp,\beta},
\end{align*}
where we employ the summation with respect to repeated indices and assume that $\alp$ and $\bet$ standardly range from $1$ to $\dime$ for the sake of brevity.
The symbol $\delta_{\alp\beta}$ is reserved for the Kronecker delta, defined as
$\delta_{\alp\beta} = 1$ for $\alp = \beta$ and $\delta_{\alp\beta} = 0$
otherwise, so that the unit tensor is expressed as $\T{I} =
(\delta_{\alpha\beta})_{\alpha,\bet}$. To keep the notation compact, $\X$
abbreviates $\R$, $\xRd$, or $\xRdd$ and $\FT{\X}$ is used for $\C$,
$\xCd$, or $\xCdd$. We endow the spaces with the standard inner product and
norms, e.g.
\begin{align*}
\sprod{\Tu}{\Tv}{\xCd}
&= 
u_\alp \conj{v_\alp},
&
\norm{\Tv}{\xCd}^2
&= 
\sprod{\Tv}{\Tv}{\xCd},
&
\norm{\T{M}}{\xCdd}
&=
\max_{\Tv\neq\T{0}} 
\frac{\norm{\T{M}\Tv}{\xCd}}{\norm{\Tv}{\xCd}},
\end{align*}
for $\T{u},\T{v} \in\xCd$ and $\T{M}\in\xCdd$.

We consider cells in the form $\puc = \prod_\alp [-\lpuc_\alp,
\lpuc_\alp]$. Then, a function $\Tf : \xRd \rightarrow \X$ is $\puc$-periodic if
$$
\Tf(\x + \sum_\alp 2Y_\alp k_\alp \Tunv{\alp})
=
\Tf(\x)
\text{ for }
\x \in \puc
\text{ and }
\k \in \Zd,
$$
where $\Tunv{\alp} = (\delta_{\alp\beta})_\beta$ denotes the
$\alp$-th basis vector of $\xRd$. For $p \in [1,\infty]$,
\begin{align*}
\Lper{p}{\X}
=
\left\{
\Tf \in L^p_\mathrm{loc}(\xRd;\X) 
: 
\Tf \text{ is $\puc$-periodic}
\right\}
\end{align*}
denotes the space of $p$-summable $\X$-valued periodic functions. For $p \in
[1,\infty)$ these are equipped with the norm
\begin{align*}
\norm{\Tf}{\Lper{p}{\X}}^p
=
\frac{1}{\meas{\puc}}
\int_\puc
\norm{\Tf(\x)}{\X}^p\de\x,
\end{align*}
with $\meas{\puc} = 2^\dime \prod_\alp \lpuc_\alp$; for $p = \infty$ we set 
\begin{align*}
\norm{\Tf}{\Lper{\infty}{\X}}
=
\esssup_{\x\in\puc} \norm{\Tf(\x)}{\X}.
\end{align*}
For the sake of brevity, we write $\Lper{p}{}$ instead of $\Lper{p}{\R}$.
When $p=2$, $\Lper{2}{\X}$ is a Hilbert space with the scalar product
\begin{align*}
 \scal{\Tu}{\Tv}_{\Lper{2}{\X}} = \frac{1}{\meas{\puc}}\int_{\puc} \scal{\Tu(\x)}{\Tv(\x)}_{\X}\de\x.
\end{align*}

The Fourier transform of $\Tf \in \Lper{2}{\X}$ is given by
\begin{align}\label{eq:FT_def}
\FT{\Tf}( \k )
=
\conj{\FT{\Tf}(-\k)}
=
\frac{1}{\meas{\puc}}
\int_\puc
\Tf(\x)
\bfun{-\k}(\x)
\de\x
\in \FT{\X}
\text{ for }
\k \in \Zd,
\end{align}
where the functions 
\begin{align*}
\bfun{\k}(\x)
=
\exp{
\Bigl(
  \imu\pi
    \sprod{\Tz(\k)}{\x}{\xRd} 
\Bigr)}
\text{ for }
\x \in \puc 
\text{ and }
\k \in \Zd,
\end{align*}
with $\Tz(\k) = (k_\alp / \lpuc_\alp)_\alp$, form an orthonormal basis $\{\bfun{\k}\}_{\k \in \Zd}$ which span
$\Lper{2}{}$, i.e. for $\m \in \Zd$
\begin{align*}
\sprod{\bfun{\k}}{\bfun{\T{m}}}{\Lper{2}{}}
&=
\delta_{\k \m},
\end{align*}
cf.~\cite[pp.~89--91]{rudin1986real}.
Thus, every function $\Tf \in \Lper{2}{\X}$ can be expressed in the form
\begin{align*}
\Tf(\x) 
=
\sum_{\k \in \Zd}
\FT{\Tf}( \k )
\bfun{\k}(\x)
\text{ for }
\x \in \puc,
\end{align*}
and for arbitrary $\Tg \in \Lper{2}{\X}$
\begin{align}\label{eq:Plancherel}
\sprod{\Tf}{\Tg}{\Lper{2}{\X}}
=
\sum_{\k \in \Zd}
\sprod{\FT{\Tf}(\k)}{\FT{\Tg}(\k)}{\FT{\X}}.
\end{align}

The Sobolev spaces of periodic functions, $\Hper{s}{\X}$ and
$\Wper{s}{\infty}{\X}$ for $s \in [0;\infty]$ are introduced in an analogous way
to $\Lper{2}{\X}$, and $\Lper{\infty}{\X}$ and are endowed with the following
norms, cf.~\cite[p.~141]{Saranen:2002:PIP},
\begin{subequations}\label{eq:sobolev_norms}
\begin{align}\label{eq:sobolev_norm_Hs}
\norm{\Tf}{\Hper{s}{\X}}^2
&=
\normB{%
\sum_{\k\in\Zd}
\norm{\ul{\Txi}(\k)}{\xRd}^s
\FT{\Tf}(\k)
\bfun{\k}
}{\Lper{2}{\X}}^2
= 
\sum_{\k\in\Zd} 
\norm{\ul{\Txi}(\k)}{\xRd}^{2s} 
\norm{\FT{\Tf}(\k)}{\FT{\X}}^2,
\\
\label{eq:sobolev_norm_Wsinfty}
\norm{\Tf}{\Wper{s}{\infty}{\X}}
& =
\normB{%
\sum_{\k\in\Zd}
\norm{\ul{\Txi}(\k)}{\xRd}^s
\FT{\Tf}(\k)
\bfun{\k}
}{\Lper{\infty}{\X}},
\end{align}
\end{subequations}
where 
\begin{align*}
\ul{\Txi}(\k) 
=
\begin{cases}
\Tz(\k) & \text{for } \k\in\ZdmO,
\\
\T{1} & \text{for }\k=\T{0}.
\end{cases}
\end{align*}
The space of $s$-times differentiable periodic functions $\Cper{s}{\X}$ is
understood in a similar way, so that $\Hper{s}{X} \subset \Cper{0}{X}$ for $s >
\dime / 2$ by the Sobolev inequalities,
e.g.~\cite[Section~5.6.3]{Evans:2000:PDE}. We also abbreviate $\Hper{s}{\R}$ to
$\Hper{s}{}$, or $\Cper{s}{\R}$ to $\Cper{s}{}$. Additional notation is
introduced when needed.

\section{Weak and integral formulations of cell problem}
\label{sec:weak}

Having introduced the general notation, we now proceed with the formulation
of the Lippmann-Schwinger equation~\eqref{eq:Lippman-Schwinger} and of the weak
form of the cell
problem~\eqref{eq:unit_cell_problem}--\eqref{eq:grad_field_split} in a
rigorous way. By introducing a suitable projection operator in
\Sref{sec:projection_operator}, we then show that these two formulations are
equivalent, thereby establishing a convenient discretization framework for the
following sections.

\subsection{Problem setting}\label{sec:problem_setting}

Given the structure of the cell problem~\eqref{eq:unit_cell_problem}, we begin
with the definition of the divergence and curl operators for $\Tu
\in \Lper{2}{\xRd}$, understood as periodic distributions\footnote{Here,
$\Hper{-1}{\xRdd}$ and $\Hper{-1}{}$ standardly denote the spaces of all linear
functionals on $\Hper{1}{\xRdd}$ and $\Hper{1}{}$.} $\nabla\times\Tu\in
\Hper{-1}{\xRdd}$ and $\nabla\cdot\Tu\in \Hper{-1}{}$ satisfying
\begin{subequations}
\begin{align}
\sprod{(\Tnabla \times \Tu)_{\alp\beta}}{v}{\Lper{2}{}}
& =
- 
\frac{1}{\meas{\puc}}
\int_\puc
\bigl( 
u_\alpha(\x) \D{\bet} v(\x)
-
u_\beta(\x) \D{\alp} v(\x) 
\bigr)
\de\x,
\label{eq:curl_def}
\\
\sprod{\Tnabla \cdot \Tu}{v}{\Lper{2}{}}
& =
- 
\frac{1}{\meas{\puc}}
\int_\puc
\Tu(\x) \cdot \Tnabla v(\x)
\de \x,
\end{align}
\end{subequations}
for all $v \in \Hper{1}{}$, $\D{\alp}$ denoting the weak derivative,
cf.~\cite[pp.~2--3]{Jikov:1994:HDO}. It will also be useful to consider the
spaces of zero-mean curl- and divergence-free fields
\begin{subequations}
\begin{align}
\E & 
=
\Bigl\{ 
\Tu \in \Lper{2}{\xRd} 
: 
\Tnabla \times \Tu = \T{0}
,
\int_\puc \Tu(\x) \de\x
=
\T{0}
\Bigr\},
\label{eq:E_def}
\\
\J &
=
\Bigl\{
\Tu \in \Lper{2}{\xRd}
:
\Tnabla \cdot \Tu = 0,
\int_\puc \Tu(\x) \de \x = \T{0}
\Bigr\}. 
\end{align}
\end{subequations}
\nomenclature{$\E$}{Set of curl-free periodic vectors}%
\nomenclature{$\J$}{Set of divergence-free periodic vectors}%
By virtue of the Helmholtz decomposition of periodic functions,
$\Lper{2}{\xRd}$ admits an orthogonal decomposition in the form,
e.g.~\cite[pp.~6--7]{Jikov:1994:HDO}
or~\cite[Section 12.1]{Milton:2002:TTC}
\begin{align}\label{eq:Helmholtz}
\Lper{2}{\xRd}
=
\U \oplusp \E \oplusp \J,
\end{align}
where $\U$ collects the constant fields, and 
$\oplusp$ denotes the direct sum of mutually
orthogonal subspaces.

As for the coefficients, we assume that they are essentially
bounded,
\begin{align}\tag{A1}\label{eq:A1}
\TA \in \Lper{\infty}{\xRdd},
\end{align}
symmetric and uniformly elliptic, so that there
exist constants $0 < \cA \leq \CA < +\infty$ such that 
\begin{align}\tag{A2}\label{eq:A2}
A_{\alp\bet}(\x) = A_{\bet\alp}(\x),
&&
\cA \norm{\Tv}{\xRd}^2
\leq 
\scal{\TA(\x)\Tv}{\Tv}_{\xRd}
\leq \CA \norm{\Tv}{\xRd}^2,
\end{align}
a.e.~in $\puc$ for all $\Tv \in \xRd$. By $\rA = \CA / \cA$ we denote the
condition number of $\TA$ quantifying the contrast in coefficients.
Similarly, for the auxiliary tensor $\TAref \in\xRdd$
in~\eqref{eq:Lippman-Schwinger} we assume that
\begin{align}\tag{A3}\label{eq:A3}
A\refe_{\alp\bet} = A\refe_{\bet\alp},
&&
\cA\refe 
\norm{\Tv}{\xRd}^2
\leq \scal{\TA\refe \Tv}{\Tv}_{\xRd} \leq \CA\refe \norm{\Tv}{\xRd}^2
\end{align}
for all $\Tv \in \xRd$ with $0 < \cA\refe \leq \CA\refe < +\infty$ and set
$\rA\refe = \CA\refe / \cA\refe$. 

The bilinear $a : \Lper{2}{\xRd} \times \Lper{2}{\xRd} \rightarrow \R$ and
linear $b: \Lper{2}{\xRd} \rightarrow \R$ forms associated with the cell problem
are defined as
\nomenclature{$\bform{\bullet}{\bullet}$}{Bilinear form}%
\nomenclature{$\lform{\bullet}$}{Linear form}%
\begin{align}\label{eq:forms_def1}
\bform{\Tu}{\Tv}
=
\sprod{\TA\Tu}{\Tv}{\Lper{2}{\xRd}},
&&
\lform{\Tv}
=
-
\sprod{\TA\TE}{\Tv}{\Lper{2}{\xRd}}.
\end{align}
Under the assumptions \eqref{eq:A1} and~\eqref{eq:A2}, they meet the
standard conditions of coercivity and boundedness, i.e.
\begin{subequations}\label{eq:form_estimates}
\begin{align}
\bform{\Tu}{\Tu} 
\geq
\cA
\norm{\Tu}{\Lper{2}{\xRd}}^2,
&&
| \bform{\Tu}{\Tv} |
& \leq 
\CA
\norm{\Tu}{\Lper{2}{\xRd}} \norm{\Tv}{\Lper{2}{\xRd}},
\\
&&
| \lform{\Tv} | 
& \leq
\CA 
\norm{\TE}{\R^{\dime}}
\norm{\Tv}{\Lper{2}{\xRd}}
\end{align}
\end{subequations}
for all $\Tu, \Tv \in \E$. Finally, the action of the operator $\TGref$ follows
from
\begin{align}\label{eq:Gamma_action}
\TGref[\Tu](\x)
=
\int_{\puc}
\TGref(\x - \y)
\Tu(\y)
\de \y
=
\sum_{\k \in \Zd}
\ThGref(\k)
\FT{\Tu}(\k)
\bfun{\k}(\x),
\end{align}
for $\Tu \in \Lper{2}{\xRd}$ and 
\begin{align} \label{eq:Gamma_hat}
\ThGref(\k) &= 
 \begin{cases}
\displaystyle
\frac{%
\Tz(\k) \otimes \Tz(\k)
}{%
\scal{\TA\refe \Tz(\k) }{\Tz(\k)}_{\xRd}}
   &\text{for }\k\in\ZdmO,
   \\
   \T{0} \otimes \T{0} &\text{for }\k=\T{0}.
 \end{cases}
\end{align}
Now, the solutions introduced earlier in
\Sref{sec:Introduction} are provided by the following
\nomenclature{$\ZdmO$}{Punctuated lattice}%
\begin{definition}\label{def:solutions} A field $\Tefl \in \E$ is a weak
solution to the cell problem if
\begin{align}\label{eq:weak_solution}\tag{Ws}
\bform{\Tefl}{\Tv} 
=
\lform{\Tv}
\text{ for all }
\Tv 
\in
\E.
\end{align}
Moreover, a solution to the Lippmann-Schwinger equation $\Te \in \Lper{2}{\xRd}$
satisfies
\begin{align}\label{eq:integral_solution}\tag{L-S}
\Te(\x)
+
\int_{\puc}
\TGref(\x - \y)
\bigl( \TA(\y) - \TAref\bigr)
\Te(\y) 
\de\y
=
\T{E}
\text{ a.e. in } \puc.
\end{align}
\end{definition}
\nomenclature{$\Tv$}{Testing function}%

\subsection{Projection operator}\label{sec:projection_operator}
Clearly, the concept of the weak solution~\eqref{eq:weak_solution} is more
convenient for further analysis, but this come at the expense of involving
a rather complex space $\E$ defined in~\eqref{eq:E_def}. To overcome this difficulty, we
introduce an auxiliary operator
\begin{align}
\OG{\Tu}(\x)
=
\TGref[\TAref\Tu](\x)
=
\int_{\puc}
\TGref(\x - \y)
\TAref
\Tu(\y)
\de \y
\end{align}
and set $\OGp{} = \OG{}$ for $\TA\refe=\T{I}$. The next lemma summarizes
their main properties.

\begin{lemma}\label{lem:projection}
Let~\eqref{eq:A3} be satisfied. Then, the following statements hold:
\begin{itemize}
  \item[(i)] $\OG{}$ is a well-defined bounded operator 
  $\Lper{2}{\xRd} \rightarrow \Lper{2}{\xRd}$,
  \item[(ii)] the adjoint operator to $\OG{}$ is given by $\OGadj{} =
  \TAref \TGref{}$,
  \item[(iii)] $\OG{}$ is a projection onto $\E$,
  \item[(iv)]  $\OG{\Tu} = \T{0}$ for all $\Tu \in \U$ and 
  $\sprod{\OG{\Tu}}{\Tv}{\Lper{2}{\xRd}}
  = 0$ for all $\Tu \in \Lper{2}{\xRd}$ and $\Tv \in \J \oplusp
  \U$,
  \item[(v)] for $\TAref = \lambda\T{I}$
  with $\lambda > 0$, $\OG{}$ becomes an orthogonal projection $\OGp{}$ independent of $\TAref$.
\end{itemize}
\end{lemma}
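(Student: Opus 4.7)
The plan is to transport everything to Fourier space via Plancherel~\eqref{eq:Plancherel}, so that $\OG{}$ becomes pointwise multiplication at each wavenumber $\k$ by the matrix $\T{P}(\k) := \ThGref(\k)\TAref$ (with $\T{P}(\T{0})=\T{0}$). Using symmetry of $\TAref$, one rewrites
\[
\T{P}(\k) = \frac{\Tz(\k)\otimes\TAref\Tz(\k)}{\scal{\TAref\Tz(\k)}{\Tz(\k)}_{\xRd}},\qquad \k\in\ZdmO,
\]
a rank-one matrix whose range is $\operatorname{span}\{\Tz(\k)\}$ and which satisfies $\T{P}(\k)\Tz(\k)=\Tz(\k)$. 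I would also record the Fourier characterisations of the factors of~\eqref{eq:Helmholtz}, all obtained by testing~\eqref{eq:curl_def} and its divergence counterpart against $\bfun{\k}$: $\Tu\in\U$ iff $\FT{\Tu}(\k)=\T{0}$ for $\k\ne\T{0}$; $\Tu\in\E$ iff $\FT{\Tu}(\T{0})=\T{0}$ and $\FT{\Tu}(\k)\in\operatorname{span}\{\Tz(\k)\}$; and $\Tu\in\J$ iff $\FT{\Tu}(\T{0})=\T{0}$ and $\Tz(\k)\cdot\FT{\Tu}(\k)=0$.

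\textbf{Short items.} For (i), ellipticity~\eqref{eq:A3} together with Cauchy--Schwarz gives $\norm{\T{P}(\k)}{\xCdd}\le\rA\refe$ uniformly in $\k$, so Plancherel yields the operator bound $\norm{\OG{\Tu}}{\Lper{2}{\xRd}}\le\rA\refe\norm{\Tu}{\Lper{2}{\xRd}}$. For (ii), the Hermitian adjoint of the real matrix $\T{P}(\k)$ is its transpose $\TAref\,\ThGref(\k)$, since $\TAref$ and $\ThGref(\k)$ are both real and symmetric; Plancherel then identifies $\OGadj{}=\TAref\TGref{}$ in $\Lper{2}{\xRd}$. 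For (v), when $\TAref=\lambda\T{I}$ the formula for $\T{P}(\k)$ collapses to $\Tz(\k)\otimes\Tz(\k)/\norm{\Tz(\k)}{\xRd}^2$, which is self-adjoint and independent of $\lambda$, i.e.\ the orthogonal projection onto $\operatorname{span}\{\Tz(\k)\}$; combined with the Fourier description of $\E$ this identifies $\OGp{}$ as the orthogonal projection onto $\E$.

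\textbf{Core items and main obstacle.} The substance is in (iii) and (iv). Idempotency $\T{P}(\k)^2=\T{P}(\k)$ is a one-line rank-one computation using $\T{P}(\k)\Tz(\k)=\Tz(\k)$, and together with $\T{P}(\T{0})=\T{0}$ gives $\OG{}\circ\OG{}=\OG{}$. The inclusion $\mathrm{range}\,\OG{}\subseteq\E$ is immediate since each $\T{P}(\k)\FT{\Tu}(\k)$ is a scalar multiple of $\Tz(\k)$ and the $\k=\T{0}$ coefficient vanishes. Surjectivity is the one step that really uses the Fourier characterisation of $\E$: any $\Tw\in\E$ admits $\FT{\Tw}(\k)=c(\k)\Tz(\k)$, whence $\T{P}(\k)\FT{\Tw}(\k)=c(\k)\Tz(\k)=\FT{\Tw}(\k)$ and so $\OG{\Tw}=\Tw$. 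Part (iv) then follows directly: $\OG{\Tu}=\T{0}$ for $\Tu\in\U$ because $\T{P}(\T{0})=\T{0}$, and for $\Tv\in\J\oplusp\U$ each term of the Plancherel sum for $\sprod{\OG{\Tu}}{\Tv}{\Lper{2}{\xRd}}$ vanishes because $\T{P}(\k)\FT{\Tu}(\k)\parallel\Tz(\k)$ while $\Tz(\k)\cdot\FT{\Tv}(\k)=0$. The main obstacle is the clean derivation of the Fourier characterisation of $\E$ from the distributional definition~\eqref{eq:curl_def}; once that is in hand, every part of the lemma reduces to the rank-one linear algebra above.
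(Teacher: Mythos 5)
Your proposal is correct and follows essentially the same route as the paper: both reduce the lemma to the rank-one Fourier symbol $\ThGref(\k)\TAref$, the uniform bound $\rA\refe$, and the identity $\bigl[\Tz(\k)\otimes\Tz(\k)\bigr]\TAref\bigl[\Tz(\k)\otimes\Tz(\k)\bigr]=\scal{\TAref\Tz(\k)}{\Tz(\k)}_{\xRd}\,\Tz(\k)\otimes\Tz(\k)$. The only cosmetic difference is in the surjectivity step of (iii), where the paper verifies curl-freeness by testing the distributional definition against $v\in\Hper{1}{}$ and obtains $\E$-invariance from the potential representation $\Tu=\Tnabla f$ (citing Jikov et al.), whereas you work from the direct Fourier characterisation $\FT{\Tu}(\k)\in\operatorname{span}\{\Tz(\k)\}$ of $\E$ --- the two are equivalent, and the characterisation you flag as the remaining obstacle is exactly the computation the paper carries out there.
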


\begin{proof}
To simplify the notation, set
$$
\|\Tz(\k)\|_{\TA\refe}^2
=
\scal{\TA\refe \Tz(\k) }{\Tz(\k)}_{\xRd}
\text{ for } 
\k \in \ZdmO,
$$
with the properties $\|\Tz(\k)\|_{\TA\refe}^2 = \|\Tz(-\k)\|_{\TA\refe}^2$ and
$\|\Tz(\k)\|_{\TA\refe}^2 \geq \cA\refe \|\Tz(\k)\|_{\xRd}^2 > 0$. To prove (i),
first observe that, by~\eqref{eq:FT_def}, $\OG{}$ maps a real-valued input
to a real-valued output,
\begin{align*}
\conj{\FT{\OG{\Tu}}(\k)}
=
\frac{\Tz(\k) \otimes \Tz(\k)}{\|\Tz(\k)\|_{\TA\refe}^2}
\TAref
\conj{\FT{\Tu}}(\k)
=
\frac{\Tz(-\k) \otimes \Tz(-\k)}{\|\Tz(-\k)\|_{\TA\refe}^2}
\TAref
\FT{\Tu}(-\k)
=
\FT{\OG{\Tu}}(-\k),
\end{align*}
for all $\k \in \ZdmO$. A standard estimate
$$
\norm{\OG{\Tu}}{\Lper{2}{\xRd}}
\leq
\rA\refe
\norm{\Tu}{\Lper{2}{\xRd}}
$$
then implies the boundedness of $\OG{}$.

Proceeding to~(ii), we recall that $\OGadj{}$ is defined by 
\begin{align*}
\sprod{\Tv}{\OG{\Tu}}{\Lper{2}{\xRd}}
=
\sprod{\OGadj{\Tv}}{\Tu}{\Lper{2}{\xRd}}
\text{ for all }
\Tu, \Tv \in \Lper{2}{\xRd}.
\end{align*}
In view of the Plancherel theorem~\eqref{eq:Plancherel} and
\eqref{eq:Gamma_action}, (ii)~requires the relation
\begin{align*}
\sum_{\alp,\bet,\gamma}
\FT{v}_\alp(\k)
\frac{%
\z_\alpha(-\k) \z_\bet(-\k)
}{%
\|\Tz(-\k)\|_{\TA\refe}^2
}
A\refe_{\bet\gamma}
\FT{u}_\gamma(-\k)
=
\sum_{\alp,\bet,\gamma}
A\refe_{\gamma\alpha}
\frac{%
\z_\alp(\k) \z_\bet(\k)
}{%
\|\Tz(\k)\|_{\TA\refe}^2
}
\FT{v}_\beta(\k)
\FT{u}_\gamma(-\k)
\end{align*}
to hold for all $\k \in \ZdmO$. This is a direct consequence of the symmetry of
$\TAref$ required in~\eqref{eq:A3}.

As for (iii), we first prove that $\OG{}$ is a projection,
i.e. $\OG{\OG{\Tu}} = \Tu$. By~\eqref{eq:Gamma_action},
\begin{align*}
\OG{\OG{\Tu}}(\x)
& =
\OG{}\Bigl[ 
\sum_{\k \in \ZdmO}
\frac{%
\Tz(\k) \otimes \Tz(\k)
}{%
\|\Tz(\k)\|_{\TA\refe}^2
}
\TAref
\FT{\Tu}(\k)
\bfun{\k}(\x)
\Bigr]
\\
& =
\sum_{\k \in \ZdmO}
\frac{%
\Tz(\k) \otimes \Tz(\k)
}{%
\|\Tz(\k)\|_{\TA\refe}^2
}
\TAref
\frac{%
\Tz(\k) \otimes \Tz(\k)
}{%
\|\Tz(\k)\|_{\TA\refe}^2
}
\TAref
\FT{\Tu}(\k)
\bfun{\k}(\x).
\end{align*}
The projection properties of $\OG{}$ are the direct consequence of the identity
\begin{align*}
\bigl[ \Tz(\k) \otimes \Tz(\k) \bigr]
\TAref
\bigr[ \Tz(\k) \otimes \Tz(\k) \bigr]
&=
\|\Tz(\k)\|_{\TA\refe}^2
\Tz(\k) \otimes \Tz(\k),
\end{align*}
valid for all $\k \in \ZdmO$. To verify that $\Tnabla \times \OG{\Tu} = \T{0}$,
we transfer~\eqref{eq:curl_def} to the Fourier space
via~\eqref{eq:Plancherel} to obtain, for all $v \in \Hper{1}{}$,
\begin{align*}
\sprod{(\Tnabla \times \OG{\Tu}&)_{\alpha\beta}}{v}{\Lper{2}{}}
= 
\\
&= -
\sum_{\k\in\Zd}
\FT{(\OG{\Tu})}_\alpha(\k) 
\FT{\D{\bet} v}(-\k)
-
\FT{(\OG{\Tu})}_\beta(\k) 
\FT{\D{\alp} v}(-\k)
\\
&= 
\sum_{\k\in\ZdmO}
\frac{\imu \pi}{\|\Tz(\k)\|_{\TA\refe}^2}
\sum_{\gamma,\delta}
{\z_\alp(\k) \z_\gamma(\k)}
A\refe_{\gamma\delta}
\FT{u}_{\delta}(\k)
\z_\bet(-\k)
\FT{v}(-\k)
\\
 &\quad-
\sum_{\k\in\ZdmO}
\frac{\imu \pi}{\|\Tz(\k)\|_{\TA\refe}^2}
\sum_{\gamma,\delta}
\z_\bet(\k) \z_\gamma(\k)
A\refe_{\gamma\delta}
\FT{u}_{\delta}(\k)
\z_\alp(-\k)
\FT{v}(-\k)
= 0,
\end{align*}
where we have utilized that
$
\FT{\D{\alp} v}(\k)
=
\imu \pi 
\z_\alp(\k)
\FT{v}(\k)
$.
Recognizing that the zero-mean property in~\eqref{eq:E_def} is satisfied by
excluding $\k = \T{0}$ from the sum~\eqref{eq:Gamma_action}, this proves that
$\OG{}$ is a projection into $\E$. The surjectivity follows from its
$\E$-invariance, i.e
$
\OG{\Tu} = \Tu 
\text{ for all }
\Tu \in \E.
$
Indeed, since for any $\Tu \in \E$ there exist $f \in \Hper{1}{}$
such that $\Tu = \Tnabla f$, e.g.~\cite[p.~6]{Jikov:1994:HDO} or \cite[p.~98]{Vondrejc:2013:FFT} for a proof, we proceed
analogously to the previous step to get, for all $v \in \Lper{2}{}$,
\begin{align*}
\sprod{
\D{\alp} f
-
( \OG{\Tnabla f}& )_\alpha
}{%
v
}{\Lper{2}{}} =
\\
& = 
\sum_{\k \in \ZdmO}
\Bigl( 
\FT{\D{\alp} f}(\k)
-
\sum_{\beta,\gamma}
\frac{\z_\alpha(\k) \z_\beta(\k)}%
{\|\Tz(\k)\|_{\TA\refe}^2}
A\refe_{\beta\gamma}
\FT{\D{\gamma} f}(\k)
\Bigr)
\FT{v}(-\k)
\\
& = 
\imu \pi
\sum_{\k \in \ZdmO}
\bigl( \z_\alp(\k) - \z_\alp(\k) \bigr)
\FT{f}(\k)
\FT{v}(-\k)
=
0.
\end{align*}

Item~(iv) directly follows from definition of $\OG{}$ and from the
Helmholtz decomposition~\eqref{eq:Helmholtz}, respectively. Finally, (iv)~a
consequence of the fact that $\lambda$ cancels out in the definition of $\OG{}$.
Hence, $\OGp{}$ becomes self-adjoint and thus orthogonal, e.g.~\cite[Theorem
12.14]{Rudin:1973:FA}.
\end{proof}

\subsection{Equivalence of solutions}

With the results of \Lref{lem:projection} in hand, we are now in the position to
state the main result of this section.
\begin{proposition}\label{prop:equivalence}
Let \eqref{eq:A1}--\eqref{eq:A3} hold. Then, the solutions $\Tefl$ and $\Te$
from \Dref{def:solutions} exist and are unique. Moreover, they are equivalent,
in the sense that
\begin{align}\label{eq:Ws_LS_equiv}
\Te = \TE + \Tefl.
\end{align}
\end{proposition}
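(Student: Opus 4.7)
The plan is to treat $\Tefl$ and $\Te$ separately: I would first obtain $\Tefl$ from Lax-Milgram applied on the closed subspace $\E$, and then identify $\Te$ by shifting $\Tefl$ by $\TE$, using the projection properties of $\OG{}$ from Lemma~\ref{lem:projection} to recast \eqref{eq:integral_solution} as \eqref{eq:weak_solution}.

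\textbf{Existence and uniqueness of $\Tefl$.} The set $\E$ is a closed linear subspace of the Hilbert space $\Lper{2}{\xRd}$, since both the curl-free condition and the zero-mean condition are preserved under $L^2$-limits. Restricted to $\E$, the bilinear form $\bform{\cdot}{\cdot}$ is coercive and continuous by \eqref{eq:form_estimates}, and $\lform{\cdot}$ is a bounded functional. Lax-Milgram then yields a unique $\Tefl \in \E$ solving \eqref{eq:weak_solution}.

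\textbf{Equivalence.} Set $\Te := \TE + \Tefl$. Splitting the integral kernel via $\TA - \TAref$ and invoking $\TGref[\TAref\,\cdot\,] = \OG{\cdot}$ recasts \eqref{eq:integral_solution} as
\[
\Tefl - \OG{\Te} + \TGref[\TA\Te] = \T{0}.
\]
Because $\TE \in \U$ and $\Tefl \in \E$, items~(iii) and (iv) of Lemma~\ref{lem:projection} give $\OG{\Te} = \OG{\TE} + \OG{\Tefl} = \T{0} + \Tefl = \Tefl$, so the above collapses to $\TGref[\TA\Te] = \T{0}$. Reading \eqref{eq:Gamma_hat} one sees that $\ThGref(\k)$ annihilates exactly the vectors orthogonal to $\Tz(\k)$ for every $\k \in \ZdmO$, whence
\[
\ker(\TGref) = \bigl\{\Tu \in \Lper{2}{\xRd} : \Tz(\k)\cdot\FT{\Tu}(\k) = 0 \text{ for all } \k \in \ZdmO\bigr\} = \U \oplusp \J,
\]
which is the $\Lper{2}{\xRd}$-orthogonal complement of $\E$ by \eqref{eq:Helmholtz}. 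Hence $\TGref[\TA\Te] = \T{0}$ is equivalent to $\bform{\Te}{\Tv} = 0$ for every $\Tv \in \E$, and bilinearity with \eqref{eq:forms_def1} gives $\bform{\Te}{\Tv} = \bform{\TE}{\Tv} + \bform{\Tefl}{\Tv} = -\lform{\Tv} + \lform{\Tv} = 0$ whenever \eqref{eq:weak_solution} holds, confirming \eqref{eq:integral_solution}.

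\textbf{Uniqueness of $\Te$.} If $\Te'$ is any L-S solution, then $\Te' - \TE = -\TGref[(\TA-\TAref)\Te']$ belongs to $\mathrm{range}(\TGref) = \E$ (again readable from \eqref{eq:Gamma_hat}), so reversing the argument above shows $\Te' - \TE$ solves \eqref{eq:weak_solution}; Lax-Milgram uniqueness then forces $\Te' = \Te$. The only non-routine piece is the kernel/range description of $\TGref$, but once the rank-one structure of \eqref{eq:Gamma_hat} is observed the remainder is bookkeeping on top of Lemma~\ref{lem:projection} and \eqref{eq:Helmholtz}.
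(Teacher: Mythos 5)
Your proof is correct and follows essentially the same strategy as the paper: Lax--Milgram on the closed subspace $\E$ for existence and uniqueness of $\Tefl$, then a two-way translation between \eqref{eq:weak_solution} and \eqref{eq:integral_solution} built on Lemma~\ref{lem:projection} and the Helmholtz decomposition. The only substantive variation is that where the paper routes the translation through the adjoint identity $\OGadj{}=\TAref\TGref$ together with the surjectivity of $\OG{}$ onto $\E$, you instead read off $\ker\TGref=\U\oplusp\J=\E^{\perp}$ directly from the rank-one symbol \eqref{eq:Gamma_hat} --- an equivalent observation (it is precisely $\ker(T^{*})=\operatorname{range}(T)^{\perp}$ in disguise), and your verification of it is sound.
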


\begin{proof}
Existence and uniqueness of the weak solution is ensured by the Lax-Milgram
theorem, e.g.~\cite[Section~6.2.1]{Evans:2000:PDE}, and
estimates~\eqref{eq:form_estimates}.
Now we demonstrate that the solution to~\eqref{eq:weak_solution} is also
a solution to~\eqref{eq:integral_solution}, thereby proving existence of the
latter. We start from the explicit expression of the weak form as
\begin{align*}
\sprod{\TA\Tefl}{\Tv}{\Lper{2}{\xRd}}
=
-
\sprod{\TA\TE}{\Tv}{\Lper{2}{\xRd}}
\text{ for all } \Tv \in \E.
\end{align*}
By \Lref{lem:projection}(iii), this entails that 
\begin{align*}
\sprod{\TA\Tefl}{\OG{\Tv}}{\Lper{2}{\xRd}}
=
-
\sprod{\TA\TE}{\OG{\Tv}}{\Lper{2}{\xRd}}
\text{ for all } \Tv \in \Lper{2}{\xRd}.
\end{align*}
Utilizing \Lref{lem:projection}(ii), we further deduce 
\begin{align*}
\sprod{\TAref\TGref[\TA\Tefl]}{\Tv}{\Lper{2}{\xRd}}
=
-
\sprod{\TAref \TGref[\TA\TE]}{\Tv}{\Lper{2}{\xRd}}
\text{ for all } \Tv \in \Lper{2}{\xRd},
\end{align*}
so that 
\begin{align*}
\TAref \TGref\bigl[\TA(\TE+\Tefl)\bigr]
=
\T{0}
\text{ a.e. in }
\puc. 
\end{align*}
Multiplying the previous relation from left by $(\TAref)^{-1}$, we find it to be
equivalent to~\eqref{eq:integral_solution} provided that the following identity
holds:
\begin{align*}
( \TE + \Tefl )
-
\OG{\TE + \Tefl}
=
\TE
\text{ a.e. in }
\puc.
\end{align*}
But this is an easy consequence of \Lref{lem:projection}(iv). 

Now we demonstrate the uniqueness of solution \eqref{eq:integral_solution} by
showing that all such solutions satisfy the weak formulation
\eqref{eq:weak_solution}. Indeed, take an $\Te \in \Lper{2}{\xRd}$ satisfying
\eqref{eq:integral_solution} and decompose it into orthogonal components $\E$
and  $\U\oplusp\J$, i.e. $\Te = \OGp{}\Te + (\T{I}-\OGp{})\Te$. Then proceeding
in the reverse order as in the previous part of the proof, we obtain the
solution equivalence \eqref{eq:Ws_LS_equiv} with the unique $\OGp{}\Te = \Tefl$
and $(\T{I}-\OGp{})\Te = \TE$.
\end{proof}

\section{Discretization}
\label{sec:Discretization}

As already noted, the variational form of~\eqref{eq:weak_solution} makes it
well-suited to the discretization by the Galerkin projection onto a suitable
finite-dimensional subspace. In our setting, it turns out that a convenient
choice is the space of trigonometric polynomials, properties of which are
summarized in \Sref{sec:trig_polynomials} following the exposition of Saranen
and Vainikko~\cite[Chapter 8]{Saranen:2002:PIP}.
Convergence of such approximate solutions is studied in 
\Sref{sec:conforming_Galerkin}, utilizing the well-known techniques developed
for the analysis of the finite element method. Finally, in
\Sref{sec:non_conforming_Galerkin}, we extend these results to cover the effects
of numerical integration.

\subsection{Trigonometric polynomials}\label{sec:trig_polynomials}
Consider the cell $\puc$ discretized with a regular grid of $N_1 \times
\ldots \times N_\dime$ points, located at 
$$
\xk
=
\begin{bmatrix} 
k_1 h_1 & \ldots & k_\dime h_\dime 
\end{bmatrix}\trn
\text{ for } 
\k \in \ZNd
=
\left\{
\m \in \Zd 
:
- \frac{N_\alp}{2} \leq m_\alp < \frac{N_\alp}{2}
\right\},
$$
where $h_\alp = 2 \lpuc_\alp / N_\alp$ correspond to grid spacings in individual directions. For brevity, we shall denote $\TN = (N_1, \ldots, N_\dime)$, $|\TN |
= \prod_\alp N_\alp$ and, similarly to \Sref{sec:problem_setting}, set $\ch =
\min_{\alp} h_\alp$, $\Ch = \max_{\alp} h_\alp$, and $\rh =
\Ch / \ch$.

To keep our exposition transparent, 
we require the grid to be symmetric with respect to the origin, i.e.
\begin{align}\label{eq:A4}
N_\alp \text{ is odd for }
\alp = 1, \ldots, \dime,
\tag{A4}
\end{align}
so that symmetry of the Fourier transform of the real-valued functions,
recall~\eqref{eq:FT_def}, can be easily preserved in the discrete setting. The
generic case is elaborated in detail in~\cite[p.~126--130]{Vondrejc:2013:FFT} and will be
reported separately.

Now, the space of $\xRd$-valued trigonometric polynomials can be defined as
\begin{align}\label{eq:trig_fourier}
\TrigN
=
\Bigl\{
\sum_{\k\in\ZNd}
\FT{\Tv}^{\k}
\bfun{\k}
:
\FT{\Tv}^{\k}\in \xCd,
\FT{\Tv}^{\k} 
=
\overline{(\FT{\Tv}^{-\k})} 
\Bigr\}
\subset 
\Cper{\infty}{\xRd}
\end{align}
Every trigonometric polynomial $\Tu_\TN \in \TrigN$ admits an expression in
terms of its grid values
\begin{align}\label{eq:trig_real}
\Tu_\TN( \x ) 
=
\sum_{\k \in \ZNd}
\Tu_\TN( \xk )
\bfunN{\k}(\x)
\text{ for }
\x \in \puc,
\end{align}
where 
\begin{equation}
\bfunN{\k}(\x)
=
 \frac{1}{|\TN|}
  \sum_{\m \in \ZNd}
  \exp 
  \left\{  
    \imu \pi \sum_{\alp} 
    m_\alpha \left( 
      \frac{x_\alp}{\lpuc_\alp} - \frac{2k_\alpha}{N_\alpha} 
    \right) 
  \right\}
\text{ for }
\k \in \ZNd,
\end{equation}
are the fundamental trigonometric polynomials satisfying 
\begin{align}\label{eq:fund_trig_polynomial_property}
\bfunN{\k}( \xm ) = \delta_{\k\m}, &&
\sprod{\bfunN{\k}}{\bfunN{\m}}{\Lper{2}{}}
=
\frac{1}{\meas{\TN}}
\delta_{\k\m},
\end{align}
for all $\k, \m \in \ZNd$. Therefore, for arbitrary $\Tu_\TN, \Tv_\TN \in \TrigN$,
\begin{align}\label{eq:scal_trig_pol}
\sprod{\Tu_\TN}{\Tv_\TN}{\Lper{2}{\xRd}}
=
\frac{1}{|\TN|}
\sum_{\k \in \ZNd}
\sprod{\Tu_\TN( \xk )}{\Tv_\TN( \xk )}{\xRd}.
\end{align}

We recall that both representations are connected by means of the
Discrete Fourier Transform:
\begin{align}
\FT{\Tu_\TN}(\k)
=
\frac{1}{\meas{\TN}}
\sum_{\m \in \ZNd}
\Tu_\TN(\xm)
\bfunDFT{-\k}{\m},
&&
\Tu_\TN(\xk)
=
\sum_{\m \in \ZNd}
\FT{\Tu_\TN}( \m )
\bfunDFT{\k}{\m},
\end{align}
for $\k \in \ZNd$ and 
\begin{align}\label{eq:DFT_def}
\bfunDFT{\k}{\m}
=
\exp   
\left(2 \pi \imu\sum_{\alp} 
     \frac{k_\alp m_\alp}{N_\alp}  
\right).
\end{align}

Two projection operators, based on relations~\eqref{eq:trig_fourier}
and~\eqref{eq:trig_real}, will be used extensively in what follows,
cf.~\cite[Chapter~8]{Saranen:2002:PIP}. First, the truncation
operator $\PN{} : \Lper{2}{\xRd} \rightarrow \TrigN$ defined as
\begin{equation}\label{eq:PN_def}
\PN{\Tu}(\x)
=
\sum_{\k \in \ZNd} 
\FT{\Tu}(\k)
\bfun{\k}( \x )
\text{ for }
\x \in \puc,
\end{equation}
is an orthogonal projection in scalar product on $\Lper{2}{}$ and also on $\Hper{s}{\xRd}$ for any
$s \geq 0$. Second, the interpolation operator $\QN{} : \Cper{0}{\xRd}
\rightarrow \TrigN$, whose action is expressed as
\begin{equation}\label{eq:QN_def}
\QN{\Tu}( \x ) 
=
\sum_{\k \in \ZNd}
\Tu( \xk )
\bfunN{\k}(\x)
\text{ for }
\x \in \puc.
\end{equation}
Note that this operator is a projection, but no longer an orthogonal one. The
following lemma, proven in \Aref{app:trigonometric}, summarizes
the approximation properties of both operators.

\begin{lemma}
\label{lem:approximation}
Let \eqref{eq:A4} hold. Then for $\Tu \in \Lper{2}{\xRd}$ 
\begin{align}\label{eq:conv_PN}
\lim_{\TN \rightarrow \T{\infty}}
\norm{\Tu - \PN{\Tu}}{%
\Lper{2}{\xRd}}  
=
0,
\end{align}
where $\TN \rightarrow \T{\infty}$ stands for $\min_\alp N_\alp \rightarrow
\infty$. Next, for $\Tu \in \Hper{s}{\xRd}$ with $s \geq r \geq 0$
\begin{align}\label{eq:rate_of_conv_PN}
\norm{\Tu - \PN{\Tu}}{%
\Hper{r}{\xRd}}  
\leq
\Ch^{s-r}
\norm{\Tu}{\Hper{s}{\xRd}},
\end{align}
and for $s > \dime/2$
\begin{align}\label{eq:rate_of_conv_QN}
\norm{\Tu - \QN{\Tu}}{%
\Hper{r}{\xRd}}
\leq
c_{r,s}
\Ch^{s-r}
\norm{\Tu}{\Hper{s}{\xRd}},
\end{align}
with
\begin{align*}
c_{r,s}^2 
= 
1 + \dime^{r} \rh^{2r} 
\sum_{\m \in \NOmO}
\norm{\m}{\xRd}^{-2s}.
\end{align*}
\end{lemma}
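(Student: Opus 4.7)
The plan is to treat the three statements independently, exploiting that all three quantities have clean Fourier-series representations via~\eqref{eq:sobolev_norm_Hs} and~\eqref{eq:PN_def}.

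For~\eqref{eq:conv_PN}, I would appeal directly to Plancherel's identity~\eqref{eq:Plancherel}: because $\PN{\Tu}$ is exactly the partial sum of the Fourier series of $\Tu$ over the index set $\ZNd$,
\begin{align*}
\norm{\Tu - \PN{\Tu}}{\Lper{2}{\xRd}}^2
=
\sum_{\k \in \Zd \setminus \ZNd}
\norm{\FT{\Tu}(\k)}{\xCd}^2,
\end{align*}
which is the tail of a convergent series and hence vanishes as $\min_\alp N_\alp \to \infty$. For~\eqref{eq:rate_of_conv_PN}, I would use the same Parseval-type representation in the $\Hper{r}{\xRd}$-norm~\eqref{eq:sobolev_norm_Hs} and exploit the spectral gap: for every $\k \in \Zd \setminus \ZNd$ there exists an index $\alp$ with $|k_\alp| \geq N_\alp/2$, so $\norm{\ul{\Txi}(\k)}{\xRd} \geq |k_\alp|/\lpuc_\alp \geq 1/h_\alp \geq 1/\Ch$. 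Factoring $\norm{\ul{\Txi}(\k)}{\xRd}^{2r} = \norm{\ul{\Txi}(\k)}{\xRd}^{2s}\,\norm{\ul{\Txi}(\k)}{\xRd}^{-2(s-r)}$ and pulling out the uniform bound $\Ch^{2(s-r)}$ on the second factor immediately yields the claimed estimate.

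The main work is in~\eqref{eq:rate_of_conv_QN}. The plan is to split
\begin{align*}
\Tu - \QN{\Tu} = (\Tu - \PN{\Tu}) + (\PN{\Tu} - \QN{\Tu}),
\end{align*}
control the first summand by~\eqref{eq:rate_of_conv_PN}, and for the second derive the aliasing identity: since $\PN{\Tu} - \QN{\Tu} \in \TrigN$ has only modes in $\ZNd$, and $\QN{\Tu}$ interpolates $\Tu$ on the grid~\eqref{eq:fund_trig_polynomial_property}, evaluation of the discrete Fourier transform of $\Tu(\xk)$ produces
\begin{align*}
\FT{(\QN{\Tu} - \PN{\Tu})}(\k)
=
\sum_{\m \in \Zd \setminus \{\T{0}\}}
\FT{\Tu}\bigl(\k + \m \odot \TN\bigr),
\qquad \k \in \ZNd,
\end{align*}
where $\m \odot \TN = (m_\alp N_\alp)_\alp$. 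This is the point where the condition $s > \dime/2$ enters, via summability of $\FT{\Tu}$ on the aliased lattice.

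To extract the rate, I would insert $\norm{\ul{\Txi}(\k + \m\odot\TN)}{\xRd}^{\pm s}$ in the inner sum and apply Cauchy--Schwarz:
\begin{align*}
\bigl|\FT{(\QN{\Tu} - \PN{\Tu})}(\k)\bigr|^2
\leq
\Bigl(\sum_{\m \neq \T{0}} \norm{\ul{\Txi}(\k + \m\odot\TN)}{\xRd}^{-2s}\Bigr)
\Bigl(\sum_{\m \neq \T{0}} \norm{\ul{\Txi}(\k + \m\odot\TN)}{\xRd}^{2s} \norm{\FT{\Tu}(\k + \m\odot\TN)}{\xCd}^2\Bigr).
\end{align*}
The second bracket, when combined with the outer sum $\sum_{\k\in\ZNd} \norm{\ul{\Txi}(\k)}{\xRd}^{2r}(\cdots)$ and again the spectral gap $\norm{\ul{\Txi}(\k+\m\odot\TN)}{\xRd} \geq 1/\Ch$ (multiplied by the appropriate power of $|\m|$), produces the factor $\Ch^{2(s-r)}\norm{\Tu}{\Hper{s}{\xRd}}^2$. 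The first bracket must be bounded by a geometric constant independent of $\k$ and $\TN$; this is where the factor $\dime^{r}\rh^{2r}\sum_{\m\in\NOmO}\norm{\m}{\xRd}^{-2s}$ in $c_{r,s}$ arises, after using $|k_\alp + m_\alp N_\alp| \geq \tfrac{1}{2}|m_\alp|N_\alp$ for $\m \neq \T{0}$ and $\k \in \ZNd$, together with $1/h_\alp \leq \rh/\Ch$. The main obstacle is bookkeeping this constant cleanly, in particular justifying the shift-independent bound on the aliasing weights and accounting for the ratio $\rh$ between the coarsest and finest grid spacings; once that is done, squaring, summing, and combining with~\eqref{eq:rate_of_conv_PN} through the triangle inequality closes the estimate.
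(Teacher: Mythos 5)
Your proposal follows essentially the same route as the paper's proof: Parseval and the tail of the Fourier series for~\eqref{eq:conv_PN}, the spectral-gap factoring $\norm{\ul{\Txi}(\k)}{\xRd}^{-2(s-r)}\leq\Ch^{2(s-r)}$ off $\ZNd$ for~\eqref{eq:rate_of_conv_PN}, and the aliasing representation of $\QN{}$ combined with Cauchy--Schwarz for~\eqref{eq:rate_of_conv_QN}. The only refinement needed to recover the \emph{stated} constant $c_{r,s}$ (rather than a slightly larger one) is to combine the two error components by orthogonality instead of the triangle inequality --- since $\Tu-\PN{\Tu}$ has Fourier support outside $\ZNd$ while $\PN{\Tu}-\QN{\Tu}\in\TrigN$, one has $\norm{\Tu - \QN{\Tu}}{\Hper{r}{\xRd}}^2=\norm{\Tu - \PN{\Tu}}{\Hper{r}{\xRd}}^2+\norm{\PN{\Tu} - \QN{\Tu}}{\Hper{r}{\xRd}}^2$, giving $c_{r,s}^2=1+\cdots$ as claimed; similarly, the paper's sharper bookkeeping (optimizing $|2k_\alp/N_\alp+2m_\alp|$ over $\k\in\ZNd$) reduces the aliasing sum to $\NOmO$ rather than your bound over all of $\ZdmO$.
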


An essential advantage of trigonometrical polynomials is
that they, under assumption~\eqref{eq:A4}, allow us to construct
structure-preserving conforming finite-dimensional approximations of spaces
$\U$, $\E$ and $\J$ in a transparent way. This is simply done by
setting
\begin{align*}
\UN = \U \cap \TrigN = \PN{\U}, 
\,
\EN = \E \cap \TrigN = \PN{\E}, 
\,
\JN = \J \cap \TrigN = \PN{\J},
\end{align*}
where $\UN$, $\EN$, $\JN$ collect the constant, zero-mean curl- and
divergence-free trigonometric polynomials, respectively. Moreover, since
$\PN{}$ is an orthogonal projection from $\Lper{2}{\xRd}$ to $\TrigN$, a
``trigonometric'' variant of the Helmholtz decomposition~\eqref{eq:Helmholtz} holds:
\begin{align}\label{eq:trig_Helmholtz}
\TrigN = \UN \oplus \EN \oplus \JN.
\end{align}

\subsection{Galerkin approximation}\label{sec:conforming_Galerkin}
Having specified the finite-dimensional spaces we will work with, we now
follow the standard route to discretize the problem by the Galerkin method. The
corresponding notion of approximate solution and its qualitative properties
follow next.

\begin{definition}
A field $\TGE \in \EN$ is a solution to the Galerkin
approximation of the cell problem~\eqref{eq:weak_solution} if
\begin{align}\tag{Ga}
\bform{\TGE}{\TvN} = \lform{\TvN}
\text{ for all }
\TvN \in \EN.
\end{align}
\end{definition}
\begin{proposition} 
Let~\eqref{eq:A1},~\eqref{eq:A2} and~\eqref{eq:A4} hold.
Then, there is the unique $\TGE$ satisfying
\begin{align*}
\lim_{\TN \rightarrow \T{\infty}}
\norm{\Tefl - \TGE}{\Lper{2}{\xRd}}
=
0.
\end{align*}
If, in addition, $\Tefl \in \Hper{s}{\xRd}$ for $s > 0$, we have
\begin{align}\label{eq:Ga_rate_of_convergence}
\norm{\Tefl-\TGE}{\Lper{2}{\xRd}}
\leq 
\rA \Ch^s \norm{\Tefl}{\Hper{s}{\xRd}}.
\end{align}
\end{proposition}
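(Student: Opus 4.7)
The plan is to follow the standard Galerkin approach: first invoke Lax--Milgram on the finite-dimensional subspace to get existence and uniqueness, then apply Céa's lemma to reduce the error estimate to a best-approximation problem, and finally use the approximation properties of trigonometric polynomials from \Lref{lem:approximation} together with the structural fact $\EN = \PN{\E}$ from~\eqref{eq:trig_Helmholtz}.

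\textbf{Step 1 (existence and uniqueness).} Since $\EN \subset \E \subset \Lper{2}{\xRd}$ is a finite-dimensional (hence closed) subspace, the bilinear form $a$ remains coercive and bounded on $\EN \times \EN$ and the linear form $l$ remains bounded on $\EN$, with the same constants $\cA, \CA$ as in~\eqref{eq:form_estimates}. The Lax--Milgram theorem therefore delivers a unique $\TGE \in \EN$ with $\bform{\TGE}{\TvN} = \lform{\TvN}$ for all $\TvN \in \EN$.

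\textbf{Step 2 (Céa-type quasi-optimality).} For any $\TvN \in \EN$, subtract the discrete equation from the continuous one~\eqref{eq:weak_solution} tested against $\TGE - \TvN \in \EN \subset \E$, yielding the Galerkin orthogonality $\bform{\Tefl - \TGE}{\TGE - \TvN} = 0$. Writing $\TGE - \TvN = (\TGE - \Tefl) + (\Tefl - \TvN)$ and using coercivity and boundedness from~\eqref{eq:form_estimates},
\begin{align*}
\cA \norm{\Tefl - \TGE}{\Lper{2}{\xRd}}^2
\leq \bform{\Tefl-\TGE}{\Tefl - \TGE}
= \bform{\Tefl - \TGE}{\Tefl - \TvN}
\leq \CA \norm{\Tefl - \TGE}{\Lper{2}{\xRd}} \norm{\Tefl - \TvN}{\Lper{2}{\xRd}},
\end{align*}
which, dividing by $\cA \norm{\Tefl-\TGE}{\Lper{2}{\xRd}}$ and taking the infimum over $\TvN \in \EN$, gives
\begin{align*}
\norm{\Tefl-\TGE}{\Lper{2}{\xRd}} \leq \rA \inf_{\TvN \in \EN} \norm{\Tefl - \TvN}{\Lper{2}{\xRd}}.
\end{align*}

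\textbf{Step 3 (specific choice $\TvN = \PN{\Tefl}$).} Since $\Tefl \in \E$ and $\EN = \PN{\E}$ by~\eqref{eq:trig_Helmholtz}, we have $\PN{\Tefl} \in \EN$ and thus it is an admissible test function for the infimum. Combining with \Lref{lem:approximation}: the convergence~\eqref{eq:conv_PN} immediately yields $\norm{\Tefl - \TGE}{\Lper{2}{\xRd}} \to 0$ as $\TN \to \T{\infty}$, while for $\Tefl \in \Hper{s}{\xRd}$ the estimate~\eqref{eq:rate_of_conv_PN} with $r=0$ gives $\norm{\Tefl - \PN{\Tefl}}{\Lper{2}{\xRd}} \leq \Ch^s \norm{\Tefl}{\Hper{s}{\xRd}}$, hence~\eqref{eq:Ga_rate_of_convergence} follows.

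The argument is essentially routine once the identification $\EN = \PN{\E}$ is in hand; the only point requiring a moment of care is the verification that the truncation $\PN{\Tefl}$ indeed belongs to $\EN$, i.e.\ preserves the curl-free and zero-mean structure of $\Tefl$, which is precisely ensured by~\eqref{eq:trig_Helmholtz} under the symmetric-grid assumption~\eqref{eq:A4}. No further obstacle is expected.
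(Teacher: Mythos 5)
Your proposal is correct and follows essentially the same route as the paper: Lax--Milgram on $\EN$ for existence and uniqueness, Céa's lemma (which you spell out explicitly) giving the quasi-optimality constant $\rA$, and the choice $\TvN=\PN{\Tefl}\in\EN$ combined with \Lref{lem:approximation} (estimates~\eqref{eq:conv_PN} and~\eqref{eq:rate_of_conv_PN}) to conclude. No gaps.
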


\begin{proof}
Existence and uniqueness of the solution is a direct consequence of the
Lax-Milgram lemma, as the estimates~\eqref{eq:form_estimates} still hold. From
the C\'{e}a lemma~\cite{Cea:1964:AVP}, we infer that
$$
\norm{
\Tefl
-
\TGE
}{\Lper{2}{\xRd}}
\leq 
\rA 
\inf_{\TvN \in \EN}
\norm{\Tefl - \TvN}{%
\Lper{2}{\xRd}}
\leq
\rA 
\norm{\Tefl - \PN{\Tefl}}{%
\Lper{2}{\xRd}}.
$$
The statement of the proposition now follows from estimates~\eqref{eq:conv_PN}
and~\eqref{eq:rate_of_conv_PN}.
\end{proof}

\subsection{Galerkin approximation with numerical
integration}\label{sec:non_conforming_Galerkin}

The discretization procedure introduced in the previous section rests on the
assumption that the linear and bilinear forms are evaluated exactly. Of
course, this can only be made for specific forms of coefficients $\TA$, see
also \Sref{sec:Brisard} for further discussion, and in the general case a
numerical integration needs to be employed. For the trigonometric
polynomial-based discretization, the natural choice is to employ the
interpolation operator $\QN{}$ and perform the integration by utilizing
relation~\eqref{eq:scal_trig_pol}. This results in parameter-dependent
forms $a_\TN  : \TrigN \times \TrigN \rightarrow \R$ and~$b_\TN : \TrigN
\rightarrow \R$ given by
\begin{subequations}\label{eq:formsN_def}
\begin{align}
\bformN{\TuN}{\TvN}
&=
\sprod{\QN{\TA\TuN}}{\TvN}{\Lper{2}{\xRd}},
\\
\lformN{\TvN}
&=
-
\sprod{\QN{\TA\TE}}{\TvN}{\Lper{2}{\xRd}},
\end{align}
\end{subequations}
and a computable solution specified next. 
\begin{definition}
A field $\TGEi \in \EN$ is the solution to the Galerkin approximation of the
cell problem~\eqref{eq:weak_solution} with numerical integration if
\begin{align}\tag{GaNi}\label{eq:GaNi}
\bformN{\TGEi}{\TvN} 
=
\lformN{\TvN}
\text{ for all }
\TvN \in \EN.
\end{align}
\end{definition}

Due to involvement of the interpolation operator $\QN{}$, data of the problem
must satisfy
\begin{align}\tag{A1'}\label{eq:A1a}
\TA
\in 
\Cper{0}{\xRdd}
\end{align}
to ensure that the forms~\eqref{eq:formsN_def} are well-defined, and 
\begin{align}\tag{A1$^\star$}\label{eq:A1b}
\TA \in \Wper{s}{\infty}{\xRdd}
\text{ with } s > \dime / 2,
\end{align}
to estimate the rate of convergence in an analogous way to~\eqref{eq:Ga_rate_of_convergence}. 

\begin{proposition}\label{prop:GaNi}
Let \eqref{eq:A2} and~\eqref{eq:A4} hold. Then, under
assumption~\eqref{eq:A1a}, there is the unique solution $\TGEi$. If, in
addition, \eqref{eq:A1b} is satisfied and $\Tefl \in \Hper{s}{\xRd}$,
\begin{align*}
\norm{\Tefl-\TGEi}{\Lper{2}{\xRd}}
\leq & 
C\Ch^s,
\end{align*}
with the discretization-independent constant given by
\begin{align*}
 C =  
\bigl(\rA+1\bigr) \norm{\Tefl}{\Hper{s}{\xRd}}
+
\frac{c_{0,s}}{\cA} \norm{\TA}{\Wper{s}{\infty}{\xRdd}} \left(
\norm{\Tefl}{\Hper{s}{\xRd}} + \norm{\TE}{\xRd} \right).
\end{align*}
\end{proposition}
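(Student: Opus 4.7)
The plan is to establish existence/uniqueness by Lax--Milgram on $\EN$ and then extract the error estimate by a Strang-type inconsistency argument, choosing $\PN{\Tefl}$ as the comparison element and controlling the quadrature residuals via \Lref{lem:approximation}.

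First I would verify well-posedness: under~\eqref{eq:A1a}, both $\TA\TvN$ and $\TA\TE$ are continuous and periodic, so $\QN{}$ acts on them and the forms in~\eqref{eq:formsN_def} are well-defined on $\TrigN$. Using~\eqref{eq:scal_trig_pol} on the two trigonometric-polynomial factors,
\begin{align*}
\bformN{\TvN}{\TvN}
=
\frac{1}{|\TN|}
\sum_{\k \in \ZNd}
\sprod{\TA(\xk)\TvN(\xk)}{\TvN(\xk)}{\xRd},
\end{align*}
so that~\eqref{eq:A2} gives pointwise $\cA$-coercivity and $\CA$-continuity of $\bformN{}{}$ on $\EN$ with the same constants as in~\eqref{eq:form_estimates}. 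A similar grid-sum representation yields $|\lformN{\TvN}| \leq \CA \norm{\TE}{\xRd}\norm{\TvN}{\Lper{2}{\xRd}}$. Lax--Milgram on the closed subspace $\EN$ then delivers a unique $\TGEi$.

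Next I would run the Strang argument. For an arbitrary $\T{w}_\TN \in \EN$, set $\T{z}_\TN := \T{w}_\TN - \TGEi \in \EN$. Using discrete coercivity and then inserting~\eqref{eq:GaNi} and~\eqref{eq:weak_solution} (the latter is admissible since $\T{z}_\TN \in \EN \subset \E$),
\begin{align*}
\cA \norm{\T{z}_\TN}{\Lper{2}{\xRd}}^2
& \leq
\bformN{\T{w}_\TN}{\T{z}_\TN} - \lformN{\T{z}_\TN}
\\
& =
\bform{\T{w}_\TN - \Tefl}{\T{z}_\TN}
+ \bigl[ \bformN{\T{w}_\TN}{\T{z}_\TN} - \bform{\T{w}_\TN}{\T{z}_\TN} \bigr]
+ \bigl[ \lform{\T{z}_\TN} - \lformN{\T{z}_\TN} \bigr].
\end{align*}
The first summand is bounded by $\CA \norm{\T{w}_\TN - \Tefl}{\Lper{2}{\xRd}} \norm{\T{z}_\TN}{\Lper{2}{\xRd}}$, while the two bracketed consistency terms rewrite via~\eqref{eq:formsN_def} as $\sprod{\TA\T{w}_\TN - \QN{\TA\T{w}_\TN}}{\T{z}_\TN}{\Lper{2}{\xRd}}$ and $\sprod{\TA\TE - \QN{\TA\TE}}{\T{z}_\TN}{\Lper{2}{\xRd}}$ respectively, to which Cauchy--Schwarz applies. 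Dividing by $\norm{\T{z}_\TN}{\Lper{2}{\xRd}}$ and adding $\norm{\Tefl - \T{w}_\TN}{\Lper{2}{\xRd}}$ via the triangle inequality yields
\begin{align*}
\norm{\Tefl - \TGEi}{\Lper{2}{\xRd}}
\leq
(1 + \rA)\norm{\Tefl - \T{w}_\TN}{\Lper{2}{\xRd}}
+ \frac{1}{\cA}\norm{\TA \T{w}_\TN - \QN{\TA \T{w}_\TN}}{\Lper{2}{\xRd}}
+ \frac{1}{\cA}\norm{\TA\TE - \QN{\TA\TE}}{\Lper{2}{\xRd}}.
\end{align*}

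Finally I would select $\T{w}_\TN = \PN{\Tefl}$ so that, by~\eqref{eq:rate_of_conv_PN}, $\norm{\Tefl - \PN{\Tefl}}{\Lper{2}{\xRd}} \leq \Ch^s \norm{\Tefl}{\Hper{s}{\xRd}}$, producing the $(1+\rA)\Ch^s\norm{\Tefl}{\Hper{s}{\xRd}}$ contribution. The two quadrature remainders would be bounded using~\eqref{eq:rate_of_conv_QN} as $c_{0,s}\Ch^s\norm{\TA\PN{\Tefl}}{\Hper{s}{\xRd}}$ and $c_{0,s}\Ch^s\norm{\TA\TE}{\Hper{s}{\xRd}}$, together with the standard multiplier estimate $\norm{\TA \T{u}}{\Hper{s}{\xRd}} \leq \norm{\TA}{\Wper{s}{\infty}{\xRdd}}\norm{\T{u}}{\Hper{s}{\xRd}}$ available under~\eqref{eq:A1b}, the orthogonality bound $\norm{\PN{\Tefl}}{\Hper{s}{\xRd}} \leq \norm{\Tefl}{\Hper{s}{\xRd}}$, and $\norm{\TE}{\Hper{s}{\xRd}} = \norm{\TE}{\xRd}$. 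Summing the three contributions with the prefactor $1/\cA$ reproduces exactly the stated constant $C$. The main obstacle I anticipate is the clean justification of the multiplier estimate $\norm{\TA \T{u}}{\Hper{s}{\xRd}} \leq \norm{\TA}{\Wper{s}{\infty}{\xRdd}}\norm{\T{u}}{\Hper{s}{\xRd}}$ for non-integer $s > \dime/2$ in the periodic Fourier setting used here; the remainder is a routine Strang-lemma bookkeeping wired to \Lref{lem:approximation}.
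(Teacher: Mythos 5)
Your proposal is correct and follows essentially the same route as the paper: Lax--Milgram with the grid-sum coercivity argument, the second Strang lemma (which you re-derive inline rather than cite), the choice $\TvN=\PN{\Tefl}$, and the bounds \eqref{eq:rate_of_conv_PN}--\eqref{eq:rate_of_conv_QN} combined with the multiplier estimate and the $\Hper{s}{\xRd}$-orthogonality of $\PN{}$. The multiplier bound $\norm{\TA\Tu}{\Hper{s}{\xRd}}\leq\norm{\TA}{\Wper{s}{\infty}{\xRdd}}\norm{\Tu}{\Hper{s}{\xRd}}$ that you flag as the one delicate point is used in the paper in exactly the same way and without further justification there either.
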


\begin{proof} 
Existence and uniqueness of the solution rely again on the Lax-Milgram
lemma, once observing that the estimates~\eqref{eq:form_estimates} hold also for
$a_\TN$ and $b_\TN$. For example, to verify the coercivity of the bilinear form,
consider $\TuN \in \TrigN$, and combine~\eqref{eq:scal_trig_pol}
and~\eqref{eq:A2} to obtain
\begin{align*}
\bformN{\TuN}{\TuN} 
& =
\sprod{\QN{\TA \TuN}}{\TuN}{\Lper{2}{\xRd}}
\\
&=
\frac{1}{|\TN|}
\sum_{\k \in \ZNd}
\sprod{\TA(\xk) \TuN(\xk)}{\TuN(\xk)}{\xRd}
\\
& \geq
\frac{\cA}{|\TN|}
\sum_{\k \in \ZNd}
\sprod{\TuN(\xk)}{\TuN(\xk)}{\xRd}
=
\cA
\norm{\TuN}{\Lper{2}{\xRd}}^2.
\end{align*}
The remaining estimates are established by similar arguments. 

As a consequence of the second Strang lemma~\cite{Strang:1972:VCF}, we have
\begin{align*}
\norm{\Tefl -\TGEi}{\Lper{2}{\xRd}}
\leq & 
\frac{1}{\cA}
\sup_{\TuN \in \EN}
\frac{| \lform{\TuN} - \lformN{\TuN} |}%
{\norm{\TuN}{\Lper{2}{\xRd}}}
\\
& +
\inf_{\TvN \in \EN}
\Bigl[
(1 + \rA )
\norm{\Tefl - \TvN}{\Lper{2}{\xRd}}
\\
& + 
\frac{1}{\cA}
\sup_{\TuN \in \EN}
\frac{| \bform{\TvN}{\TuN} - \bformN{\TvN}{\TuN} |}%
{\norm{\TuN}{\Lper{2}{\xRd}}}
\Bigr].
\end{align*}
We estimate the differences between the forms by the Cauchy-Schwartz inequality
\begin{align*}
| \lform{\TuN} - \lformN{\TuN} |
& \leq
\norm{\TA\TE  - \QN{\TA\TE}}{\Lper{2}{\xRd}}
\norm{\TuN}{\Lper{2}{\xRd}},
\\
| \bform{\TvN}{\TuN} - \bformN{\TvN}{\TuN} |
& \leq
\norm{\TA \TvN - \QN{\TA\TvN} }{\Lper{2}{\xRd}}
\norm{\TuN}{\Lper{2}{\xRd}},
\end{align*}
and set $\TvN = \PN{\Tefl}$. Now, relations \eqref{eq:rate_of_conv_PN} and
\eqref{eq:rate_of_conv_QN} with $r=0$ yield
\begin{align*}
\norm{\Tefl - \PN{\Tefl}}{\Lper{2}{\xRd}}
& \leq 
\Ch^s 
\norm{\Tefl}{\Hper{s}{\xRd}},
\\
\norm{\TA \PN{\Tefl} - \QN{\TA\PN{\Tefl}} }{\Lper{2}{\xRd}}
& \leq
c_{0,s} \Ch^s 
\norm{\TA \PN{\Tefl}}{\Hper{s}{\xRdd}}
\\
& \leq 
c_{0,s} \Ch^s 
\norm{\TA}{\Wper{s}{\infty}{\xRdd}}
\norm{\Tefl}{\Hper{s}{\xRd}},
\end{align*}
where in the last inequality, we used the fact that the truncation operator  $\PN{}$ is
the orthogonal projection from $\Hper{s}{\xRd}$ onto $\TrigN$. The estimate
\begin{align*}
\norm{\TA\TE  - \QN{\TA\TE} }{\Lper{2}{\xRd}}
\leq
c_{0,s} \Ch^s 
\norm{\TA}{\Wper{s}{\infty}{\xRdd}}
\norm{\TE}{\xRd}
\end{align*}
is established by analogous arguments. Utilizing these estimates in
the Second Strang lemma completes the proof.
\end{proof}

To close this section, let us note
that the proofs of the rate of convergence of approximate solutions require
sufficient regularity of the weak solution, i.e. $\Tefl \in \Hper{s}{\xRd}$.
In the context of \Sref{sec:non_conforming_Galerkin}, these assumptions
are not too restrictive, since $\TA \in \Wper{s}{\infty}{\xRdd}$ with $s \in
\set{N}$ implies that $\Tefl \in \Hper{s}{\R^\dime}$.
A short proof of this result is given in
\Aref{app:regularity}, in order to make the paper self-contained. In addition,
computational experiments supporting the statements of \Pref{prop:GaNi} are
available in~\cite[pp.~142--145]{Vondrejc:2013:FFT}.

\nomenclature{$\Tv$}{Test function}%

\section{Algebraic system and its solution}
\label{sec:Algebraic_system}
The present section is dedicated to the analysis of the fully discrete version
of~Galerkin approximation with Numerical integration \eqref{eq:GaNi}. In view of the representation formula for trigonometric
polynomials~\eqref{eq:trig_fourier}, we find it convenient to base our
approach on structured vectors and matrices storing the values at the grid
points both in the real and in the Fourier domains. Therefore, the goal of
\Sref{sec:discrete_notation} is to refine the notation and adapt the
relevant results to the discrete case. The notion of the discrete solution is
presented in \Sref{sec:fully_discrete}. Here, we also show its equivalence to
linear systems arising from the discrete Lippmann-Schwinger equation and from
the variational formulation. In \Sref{sec:CG}, we demonstrate that the latter
system is solvable by the Conjugate gradient algorithm and provide its
connection to the original Moulinec-Suquet scheme~\cite{Moulinec:1994:FNMC},
thereby explaining our earlier computational
observations~\cite{ZeVoNoMa:2010:AFFTH}.

\subsection{Notation and preliminaries}\label{sec:discrete_notation}

A multi-index notation is systematically employed, in which $\X^{\TN}$
represents $\X^{N_1 \times \cdots \times N_\dime}$. Then the sets $\xXN$ and
$\xMN$, or their complex counterparts, represent the space of structured vectors
and matrices denoted by bold serif font, e.g. $\MBv =
(v_{\alp}^\k)_{\alp}^{\k\in\ZN} \in \xXN$ and $\MB{M} =
(M_{\alp\bet}^{\k\m})_{\alp,\bet}^{\k,\m\in\ZN} \in \xMN$; sub-vectors and
sub-matrices are designated by superscripts, e.g. $\MBv^\k =
(v_{\alp}^\k)_{\alp} \in \xRd$ or $\MB{M}^{\k\m} =
(M_{\alp\bet}^{\k\m})_{\alp,\bet} \in \xRdd$. The scalar product on e.g. $\xXN$
is hence defined as
\begin{align*}
\sprod{\MB{u}}{\MBv}{\xXN} 
= 
\frac{1}{\meas{\TN}}
\sum_{\k\in\ZNd} 
\sprod{\MBu^{\k}}{\MBv^{\k}}{\xRd},
\end{align*}
and the structured matrix-vector or matrix-matrix multiplications
follow from
\begin{align*}
(\MB{M}\MBv)^{\k} 
= 
\sum_{\m\in\ZNd}
\MB{M}^{\k\m}
\MBv^{\m}
\in \xRd
\text{ or }
(\MB{M} \MB{L})^{\k\m} 
=
\sum_{\n\in\ZNd}
\MB{M}^{\k\n}\MB{L}^{\n\m}
\in \xRdd,
\end{align*}
for $\k,\m \in \ZNd$ and $\MB{L} \in \xMN$. For later purposes, we also
collect input data in the form of structured matrices and vectors
\begin{align*}
\MBA_\TN
&=
\Bigl(
\del_{\k\m}A_{\alp\beta}(\xk)
\Bigr)_{\alp,\beta}^{\k,\m\in\ZNd},
&
\MBE_\TN
&=
\Bigl( E_{\alp}
\Bigr)_{\alp}^{\k\in\ZNd}, 
&
\MBA_\TN\refe
&=
\Bigl(
\del_{\k\m}A\refe_{\alp\beta}
\Bigr)_{\alp,\beta}^{\k,\m\in\ZNd}, 
\end{align*}
assuming that~\eqref{eq:A1a} holds.

The relation between $\TrigN$ and $\xRdN$ is established by an operator
transforming the values at the grid points into a structured vector, i.e.
\begin{align*}
\IN{} : \Cper{0}{\xRd} \rightarrow \xRdN,
&&
\IN{\Tu_\TN} = \left(\Tu_\TN(\xk) \right)^{\k \in
\ZNd} \in \xRdN.
\end{align*}
The following lemma summarizes its properties and applications.

\begin{lemma}\label{lemma:isometry}
Under~\eqref{eq:A4}, 
the operator $\IN{}$ is an one-to-one isometric map from $\TrigN$ onto $\xRdN$,
i.e. for all $\Tu_\TN,\Tv_\TN \in \TrigN$
\begin{align*}
\sprod{\Tu_\TN}{\Tv_\TN}{\Lper{2}{\xRd}}
=
\sprod{\IN{\Tu_\TN}}{\IN{\Tv_\TN}}{\xRdN}.
\end{align*}
Hence, under~\eqref{eq:A1a}, 
\begin{align*}
\bformN{\TuN}{\TvN} 
=
\sprod{\MBA_\TN\MBu_\TN}{\MBv_\TN}{\xRdN}, 
&&
\lformN{\Tv_\TN} 
= 
-\sprod{\MBA_\TN\MBE_\TN}{\MBv_\TN}{\xRdN},
\end{align*}
with $\MBu_\TN = \IN{\TuN}$ and $\MBv_\TN = \IN{\TvN}$.
\end{lemma}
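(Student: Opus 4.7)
The plan is to split the lemma into three parts: bijectivity of $\IN{}$, the isometry identity, and the algebraic representation of the forms $\bformN{\cdot}{\cdot}$ and $\lformN{\cdot}$. Each follows almost immediately from results already collected in \Sref{sec:trig_polynomials}, so the proof is essentially a matter of matching definitions; the main conceptual point is that the nodal-value representation \eqref{eq:trig_real} provides the inverse of $\IN{}$, while \eqref{eq:scal_trig_pol} is precisely the Parseval-type identity we need.

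First I would establish that $\IN{}$ is a bijection between $\TrigN$ and $\xRdN$. Surjectivity is immediate: given any structured vector $\MBv = (\MBv^\k)^{\k \in \ZNd} \in \xRdN$, the trigonometric polynomial $\TvN(\x) = \sum_{\k \in \ZNd} \MBv^\k \bfunN{\k}(\x)$ lies in $\TrigN$ by \eqref{eq:trig_real} and satisfies $\IN{\TvN} = \MBv$ thanks to $\bfunN{\k}(\xm) = \delta_{\k\m}$ from \eqref{eq:fund_trig_polynomial_property}. Here assumption \eqref{eq:A4} is used implicitly to guarantee that the constraint $\FT{\TvN}^{\k} = \overline{\FT{\TvN}^{-\k}}$ in \eqref{eq:trig_fourier} is compatible with arbitrary real nodal values, so that the resulting polynomial is $\xRd$-valued. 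Injectivity is then automatic: if $\IN{\TuN} = \T{0}$ then \eqref{eq:trig_real} yields $\TuN \equiv \T{0}$.

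For the isometry, I would simply rewrite \eqref{eq:scal_trig_pol} in terms of $\IN{}$. Since $\IN{\TuN}^{\k} = \TuN(\xk)$ and $\IN{\TvN}^{\k} = \TvN(\xk)$, the right-hand side of \eqref{eq:scal_trig_pol} matches the definition of $\sprod{\cdot}{\cdot}{\xRdN}$ term by term, which gives
\begin{align*}
\sprod{\TuN}{\TvN}{\Lper{2}{\xRd}}
=
\frac{1}{|\TN|} \sum_{\k \in \ZNd} \sprod{\TuN(\xk)}{\TvN(\xk)}{\xRd}
=
\sprod{\IN{\TuN}}{\IN{\TvN}}{\xRdN}.
\end{align*}

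For the representation of the forms, the key observation is that $\QN{\TA\TuN} \in \TrigN$ interpolates the values $\TA(\xk)\TuN(\xk)$, so its image under $\IN{}$ equals $\MBA_\TN \MBu_\TN$ because $\MBA_\TN$ is block-diagonal with blocks $\TA(\xk)$. Applying the isometry just established,
\begin{align*}
\bformN{\TuN}{\TvN}
=
\sprod{\QN{\TA\TuN}}{\TvN}{\Lper{2}{\xRd}}
=
\sprod{\IN{\QN{\TA\TuN}}}{\IN{\TvN}}{\xRdN}
=
\sprod{\MBA_\TN \MBu_\TN}{\MBv_\TN}{\xRdN},
\end{align*}
and exactly the same argument, with $\MBE_\TN$ playing the role of the constant-field $\TE$, produces the identity for $\lformN{\cdot}$. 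The only point requiring any care is the continuity assumption \eqref{eq:A1a}, which is needed so that $\QN{\TA\TuN}$ is well-defined; the rest is pure bookkeeping, and I would not anticipate any genuine obstacle.
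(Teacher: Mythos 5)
Your proposal is correct and follows essentially the same route as the paper's proof: the isometry is the orthogonality relation \eqref{eq:fund_trig_polynomial_property} repackaged as \eqref{eq:scal_trig_pol}, and the form identities follow from $\IN{\QN{\Tf}} = \IN{\Tf}$ together with the block-diagonal structure of $\MBA_\TN$. Your explicit surjectivity/injectivity argument (and the remark that \eqref{eq:A4} keeps the nodal basis real-valued) only makes explicit what the paper leaves implicit in the phrase ``uniquely defined by its grid values.''
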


\begin{proof}
Both statements are consequences of basic properties of fundamental
trigonometric polynomials. Indeed, from \eqref{eq:trig_real} we see that every
trigonometric polynomial is uniquely defined by its grid values and, for a given
$\Tu_\TN$ and $\Tv_\TN \in \TrigN$, we infer
\begin{align*}
\sprod{\Tu_\TN}{\Tv_\TN}{\Lper{2}{\xRd}}
& = 
\sum_{\k,\m\in\ZNd} 
\sprod{\Tu_\TN(\xk)}{\Tv_\TN(\xm)}{\xRd}
\cdot
\sprod{\bfunN{\k}}{\bfunN{\m}}{\Lper{2}{}}
\\
& = 
\sum_{\k,\m\in\ZNd} 
\sprod{\Tu_\TN(\xk)}{\Tv_\TN(\xm)}{\xRd}
\cdot 
\frac{\del_{\k\m}}{\meas{\TN}}
\\
&=
\sprod{\IN{\Tu_\TN}}{\IN{\Tv_\TN}}{\xRdN}.
\end{align*}
The discrete representation of the bilinear form $a_\TN$ follows from
\begin{equation*}
\bformN{\TuN}{\TvN} 
=
\sprod{\QN{\TA\Tu_\TN}}{\Tv_\TN}{\Lper{2}{\xRd}}
= 
\sprod{\IN{\QN{\TA\Tu_\TN}}}{\IN{\TvN}}{\xRdN},
\end{equation*}
and from the fact that $\IN{\QN{\Tf}} = \IN{\Tf}$ for any $\Tf \in \Cper{0}{\xRd}$. The
representation of the linear form $b_\TN$ is established in the same way.
\end{proof}

With the help of operator $\IN{}$, discrete analogues of
sub-spaces~\eqref{eq:trig_Helmholtz} are simply provided by\footnote{%
Note that the space $\xEN$ should not be mistaken with the structured vector
$\MBE_\TN\in\xXN$ storing the grid values of the average gradient field $\TE$.}
\begin{align*}
\xUN &= \IN{\UN},
&
\xEN &= \IN{\EN},
&
\xJN &= \IN{\JN},
\end{align*}
where $\xUN$, $\xEN$, and $\xJN$ collect the grid values of constant,
zero-mean curl- and divergence-free trigonometric polynomials
with values in $\xRd$.
It also follows from \Lref{lemma:isometry} that the Helmholtz decomposition
property~\eqref{eq:Helmholtz} is inherited also in the discrete setting, i.e.
\begin{align}\label{eq:Helmholtz_discrete}
\xRdN = \xUN \oplusp \xEN \oplusp \xJN.
\end{align}

To provide projection operators to $\xEN$, we proceed in the same way as in
\Sref{sec:projection_operator}. First, using~\eqref{eq:Gamma_hat}, we
represent the Fourier transform of the kernel of the Lippmann-Schwinger equation
as
\begin{align*}
 \FTMBGamm 
 = 
 \left(\del_{\k\m} \ThGref(\k)\right)^{\k,\m\in\ZNd}\in\xMN,
\end{align*}
and transform it to the real space by means of matrices
%
\begin{align*}
\DFT &= 
\frac{1}{\meas{\TN}}
\left(\del_{\alp\beta}\omega_{\TN}^{-\k\m}\right)_{\alp,\beta}^{\k,\m\in\ZNd}\in\xhMN, 
&
\iDFT &= \left( \del_{\alp\beta}\omega_{\TN}^{\k\m}
\right)_{\alp,\beta}^{\k,\m\in\ZNd} \in\xhMN,
\end{align*}
%
implementing the forward and inverse discrete Fourier transforms,
recall~\eqref{eq:DFT_def}. This results in
\begin{align}\label{eq:FFT_matrices}
\MBGamm = \iDFT \FTMBGamm \DFT \in \xMN, 
&&
\OGD = \MBGamm \MBA_\TN\refe \in  \xMN;
\end{align}
we also set $\OGDo = \OGD$ for $\TA\refe = \T{I}\in\xRdd$. By translating
\Lref{lem:projection} to the current representation, we obtain:

\begin{lemma}\label{lem:FD_projection}
Let~\eqref{eq:A3} and~\eqref{eq:A4} be satisfied. Then, the following statements
hold:
\begin{itemize}
  \item[(i)] $\OGD$ is a well-defined bounded operator 
  $\xRdN \rightarrow \xRdN$,
  \item[(ii)] the adjoint operator to $\OGD$ is given by $\OGDadj
  \MBA\refe_\TN = \MBA\refe_\TN \OGD$,
  \item[(iii)] $\OGD$ is a projection onto $\xEN$,
  \item[(iv)]  $\OGD[\MBu] = \T{0}$ for all $\MBu \in \xUN$ and 
  $\sprod{\OGD[\MBu]}{\MBv}{\xRdN}
  = 0$ for all $\MBu \in \xRdN$ and $\MBv \in \xJN \oplusp
  \xUN$,
  \item[(v)] for $\TAref = \lambda\T{I}$
  with $\lambda > 0$, $\OGD$ becomes an orthogonal projection $\OGDo$ independent of $\TAref$. 
\end{itemize}
\end{lemma}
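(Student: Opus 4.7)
The plan is to exploit the isometric correspondence between $\TrigN$ and $\xRdN$ provided by \Lref{lemma:isometry}, and to view $\OGD$ as the discrete realization via the DFT matrices of the continuous operator $\OG{}$ restricted to $\TrigN$. Once this correspondence is established, the five items become direct translations of the corresponding parts of \Lref{lem:projection}.

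The pivotal step is to verify the commutation identity $\IN{\OG{\TuN}} = \OGD\,\IN{\TuN}$ for all $\TuN\in\TrigN$. Since every $\TuN\in\TrigN$ has only finitely many nonzero Fourier coefficients, indexed by $\ZNd$, the representation \eqref{eq:Gamma_action} shows that $\OG{\TuN}$ again lies in $\TrigN$, with its Fourier coefficients obtained by applying $\ThGref(\k)\TAref$ componentwise. The matrix $\OGD = \iDFT\,\FTMBGamm\,\DFT\,\MBA_\TN\refe$ from \eqref{eq:FFT_matrices} is precisely the grid-value realization of this operation: $\MBA_\TN\refe$ multiplies pointwise by $\TAref$, $\DFT$ transforms the grid values into Fourier coefficients against the $\bfun{\k}$ basis, $\FTMBGamm$ applies $\ThGref(\k)$ componentwise, and $\iDFT$ returns to the grid.

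With the commutation identity in hand, each item follows by pulling the continuous statement back through $\IN{}$. Boundedness (i) transfers because $\IN{}$ is an isometry, yielding $\norm{\OGD\MBu}{\xRdN} \leq \rA\refe\,\norm{\MBu}{\xRdN}$. The adjoint relation (ii) follows by expressing $\sprod{\OGD\MBu}{\MBv}{\xRdN}$ as $\sprod{\OG{\TuN}}{\TvN}{\Lper{2}{\xRd}}$, invoking \Lref{lem:projection}(ii), and recognizing that multiplication by the constant tensor $\TAref$ is represented on grid values by $\MBA_\TN\refe$. For (iii), idempotence of $\OGD$ is inherited from that of $\OG{}$; the image lies in $\xEN$ because $\OG{\TuN}\in\E\cap\TrigN = \EN$, and surjectivity onto $\xEN$ follows from the $\EN$-invariance $\OG{\TuN} = \TuN$ for $\TuN\in\EN$. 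Item (iv) translates directly through $\IN{}$, using that the isometry carries the decomposition \eqref{eq:trig_Helmholtz} onto \eqref{eq:Helmholtz_discrete}. Finally, for (v), when $\TAref = \lambda\T{I}$ the scalar cancels in $\MBGamm\,\MBA_\TN\refe$, so $\OGD = \OGDo$ is independent of $\TAref$; combined with (ii), this gives self-adjointness and hence orthogonality.

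The principal obstacle I foresee is the careful bookkeeping behind the commutation identity: one must match the normalizations of $\DFT$ and $\iDFT$ in \eqref{eq:FFT_matrices} and of the discrete inner product on $\xRdN$ against the Fourier conventions \eqref{eq:FT_def} and the grid representation \eqref{eq:trig_real}, to ensure that the matrix product $\iDFT\,\FTMBGamm\,\DFT\,\MBA_\TN\refe$ really implements, on the grid, the continuous Fourier-multiplier action of $\OG{}$. Once this alignment is verified, the remaining derivations are essentially mechanical transcription of the arguments given in the proof of \Lref{lem:projection}.
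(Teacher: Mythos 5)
Your argument is correct, but it takes a different route from the paper. The paper disposes of this lemma in one line --- ``the proof follows exactly the same route as for \Lref{lem:projection}'' --- i.e.\ it asks the reader to repeat the Fourier-space verifications of Lemma~\ref{lem:projection} verbatim in the discrete setting, with the sums over $\Zd$ replaced by sums over $\ZNd$ and the Fourier coefficients by DFT coefficients of structured vectors. You instead prove the intertwining identity $\IN{\OG{\TuN}} = \OGD\,\IN{\TuN}$ on $\TrigN$ (valid because $\ThGref$ acts as a Fourier multiplier and therefore preserves the span of $\{\bfun{\k}\}_{\k\in\ZNd}$, while $\MBA_\TN\refe$ is pointwise multiplication by a constant tensor, so it commutes with the passage between grid values and Fourier coefficients), and then transport each of (i)--(v) from \Lref{lem:projection} through the isometric bijection $\IN{}$ of \Lref{lemma:isometry}. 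This buys economy and makes the structural reason for the discrete lemma explicit: $\OGD$ is the continuous operator conjugated by an isometry on an invariant subspace, so nothing needs to be recomputed; it also uses the fact that $\IN{}$ carries \eqref{eq:trig_Helmholtz} onto \eqref{eq:Helmholtz_discrete} exactly where it is needed for (iii) and (iv). The paper's direct route avoids having to justify the commutation identity and the normalization bookkeeping between $\DFT$, $\iDFT$ and the scaled inner products --- which you rightly identify as the one point requiring care --- but at the cost of redoing all five verifications. Both are complete proofs; yours is arguably the cleaner one, provided the normalization check is actually carried out rather than merely announced.
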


\begin{proof}
The proof follows exactly the same route as for \Lref{lem:projection}.
\end{proof}

To complete our exposition, we highlight close connections among the spaces
involved in the discretization of~\eqref{eq:weak_solution}. This is
schematically shown in the following diagram, which, under
assumption~\eqref{eq:A4}, commutes.

\begin{figure}[ht]\label{fig:scheme}
\begin{center}
\includegraphics[width=.7\textwidth]{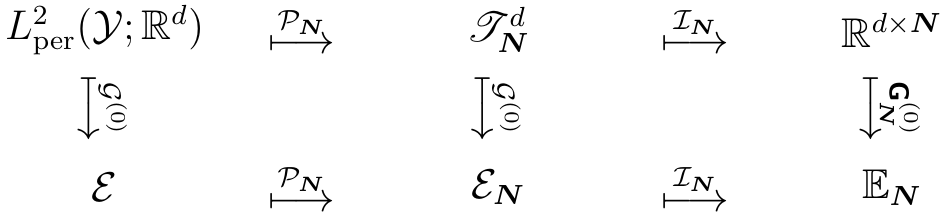}
\end{center}
\caption{Discretization strategy.}
\end{figure}

\subsection{Fully discrete formulations}\label{sec:fully_discrete}
After introducing the general concepts in the previous section, now we are ready
to convert the variational problem~\eqref{eq:GaNi} into its fully discrete
version. 

\begin{definition} 
A structured vector $\MBefl_\TN \in \xEN$ is a solution to the fully discrete
form of~\eqref{eq:GaNi} if
\begin{align}\tag{GaNiD}\label{eq:GaNiD}
\sprod{\MBA_\TN\MBefl_\TN}{\MBv_\TN}{\xRdN} 
=
-\sprod{\MBA_\TN\MBE_\TN}{\MBv_\TN}{\xRdN}
\text{ for all }
\MBv_\TN\in\xEN.
\end{align}
Moreover, a solution to the discrete Lippmann-Schwinger equation $\MBe_\TN \in
\xRdN$ satisfies
\begin{align}\tag{L-SD}\label{eq:LSD}
[ \MBI + \MBGamm (\MBA_\TN - \MBA^{(0)}) ] \MBe_\TN 
=
\MBE_\TN. 
\end{align}
\end{definition}

The following result shows that, as expected, these solutions coincide and can
also be related to an equivalent system of linear equations.

\begin{proposition}
Let \eqref{eq:A1a} and \eqref{eq:A2}--\eqref{eq:A4} be satisfied. Then, the
following holds: 
\begin{itemize}
  \item[(i)] the unique solution to~\eqref{eq:GaNiD} is given by
  $\MBefl_\TN = \IN{\Tefl_\TN}$,
  \item[(ii)] the unique solution to~\eqref{eq:LSD} satisfies $\MBe_\TN =
  \MBE_\TN + \MBefl_\TN$,
  \item[(iii)] for $\TAref = \lambda \T{I}$, with $\lambda >
  0$,~\eqref{eq:GaNiD} is equivalent to the linear system for $\MBefl_\TN\in\xEN$
\begin{align}\label{eq:final_system}
\OGDo \MBA_\TN \MBefl_\TN =  -\OGDo \MBA_\TN \MBE_\TN.
\end{align}
\end{itemize}
\end{proposition}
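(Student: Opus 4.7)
My plan is to lift the continuous equivalence result \Pref{prop:equivalence} to the algebraic setting via the isometric identification of \Lref{lemma:isometry}, and then to mimic the projection-operator argument of \Sref{sec:projection_operator} at the discrete level, with \Lref{lem:FD_projection} replacing \Lref{lem:projection}. Item (i) is then essentially a translation, item (ii) requires replaying the equivalence proof, and item (iii) is a short calculation exploiting self-adjointness of $\OGDo$.

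For (i), I use that $\IN{}$ is a bijection between $\EN$ and $\xEN$ preserving the inner product. By \Lref{lemma:isometry}, the identity $\bformN{\TGEi}{\TvN}=\lformN{\TvN}$ for all $\TvN\in\EN$ becomes
\begin{align*}
\sprod{\MBA_\TN\IN{\TGEi}}{\IN{\TvN}}{\xRdN}
=
-\sprod{\MBA_\TN\MBE_\TN}{\IN{\TvN}}{\xRdN},
\end{align*}
with $\IN{\TvN}$ ranging over $\xEN$. Existence, uniqueness, and the identification $\MBefl_\TN=\IN{\TGEi}$ therefore transfer directly from \Pref{prop:GaNi}.

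For (ii), I will reproduce the structure of the proof of \Pref{prop:equivalence}. Starting from \eqref{eq:GaNiD} and testing against $\OGD\MBw$ for arbitrary $\MBw\in\xRdN$ (admissible since $\OGD\MBw\in\xEN$ by \Lref{lem:FD_projection}(iii)), I move $\OGD$ onto the other slot using the adjoint formula \Lref{lem:FD_projection}(ii), obtaining $\MBA\refe_\TN \MBGamm \MBA_\TN(\MBE_\TN+\MBefl_\TN)=\T{0}$; multiplying by $(\MBA\refe_\TN)^{-1}$, which exists thanks to \eqref{eq:A3}, yields $\MBGamm \MBA_\TN(\MBE_\TN+\MBefl_\TN)=\T{0}$. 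Writing the left-hand side of \eqref{eq:LSD} as $\MBe_\TN+\MBGamm\MBA_\TN\MBe_\TN-\OGD\MBe_\TN$ with $\MBe_\TN:=\MBE_\TN+\MBefl_\TN$, and using \Lref{lem:FD_projection}(iii,iv) to compute $\OGD\MBE_\TN=\T{0}$ and $\OGD\MBefl_\TN=\MBefl_\TN$, I identify $\MBe_\TN$ as a solution of \eqref{eq:LSD}. For the reverse direction, any $\MBe_\TN$ solving \eqref{eq:LSD} decomposes along \eqref{eq:Helmholtz_discrete} as $\OGDo\MBe_\TN\in\xEN$ and $(\MBI-\OGDo)\MBe_\TN\in\xUN\oplusp\xJN$; unwinding the above chain in reverse shows $\OGDo\MBe_\TN$ satisfies \eqref{eq:GaNiD}, so part (i) forces uniqueness and the macroscopic part to agree with $\MBE_\TN$.

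For (iii), the specialization $\TAref=\lambda\T{I}$ makes $\OGDo$ the orthogonal projection onto $\xEN$ by \Lref{lem:FD_projection}(v), so it is self-adjoint and equals the identity on $\xEN$. If $\MBefl_\TN\in\xEN$ solves \eqref{eq:GaNiD}, then for any $\MBw\in\xRdN$ I may test against $\OGDo\MBw\in\xEN$ and shift $\OGDo$ by self-adjointness, obtaining \eqref{eq:final_system}. Conversely, pairing \eqref{eq:final_system} with an arbitrary $\MBv_\TN\in\xEN$ and using $\OGDo\MBv_\TN=\MBv_\TN$ recovers \eqref{eq:GaNiD}. The only piece I expect to require care is the bookkeeping in step (ii), where the adjoint identity \Lref{lem:FD_projection}(ii) has $\MBA\refe_\TN$ on a specific side and the subsequent invertibility argument must be applied consistently; once that is handled, everything else is a mechanical discretization of the continuous proof.
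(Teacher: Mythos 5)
Your proposal is correct and follows essentially the same route as the paper: item~(i) via the isometry of \Lref{lemma:isometry}, item~(ii) by replaying the proof of \Pref{prop:equivalence} with \Lref{lem:FD_projection} in place of \Lref{lem:projection}, and item~(iii) from the self-adjointness of $\OGDo$ and the fact that test vectors in $\xEN$ are exactly the images $\OGDo\MBw$. You simply spell out the bookkeeping that the paper leaves implicit, and your computations (in particular $\MBGamm\MBA_\TN\MBe_\TN=\T{0}$ combined with $\OGD\MBe_\TN=\MBefl_\TN$) check out.
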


\begin{proof}
As for~(i), it is an easy consequence of the properties of operator $\IN{}$
demonstrated in~\Lref{lemma:isometry}. (ii)~proceeds in the same way as in the
continuous case, recall the proof of \Pref{prop:equivalence}. Finally, from
\Lref{lem:FD_projection}(iii) we infer that all test matrices $\MBv_\TN$ can be
expressed as $\OGDo \MBu_\TN$ with $\MBu_\TN \in \xRdN$, see also
\Fref{fig:scheme}. The statement~(iii) thus follows from the self-adjointness of
$\OGDo$, cf. \Lref{lem:FD_projection}(v).
\end{proof}

Let us note that $\MBe_\TN$ can be interpreted as grid values of the solution to
yet another discretization of the Lippmann-Schwinger equation via the
trigonometric collocation method, which consists of
projecting~\eqref{eq:integral_solution} to the space of trigonometric polynomials
by operator $\QN{}$ for sufficiently regular data. An interested reader is
referred to~\cite{Vainikko:2000:FSLS} for the general setup and
to~\cite{ZeVoNoMa:2010:AFFTH} for specific application to periodic homogenization
problems.

\subsection{Solution of linear system}\label{sec:CG}
A closer inspection reveals that the non-symmetric matrix 
in~\eqref{eq:final_system} is a product of sparse structured matrices, and that
the cost of its action is governed by the contributions of $\DFT$ and $\iDFT$,
recall~\eqref{eq:FFT_matrices}. Since this step can by performed by the Fast
Fourier Transform techniques~\cite{Cooley:1965:AMC} in $O(\meas{\TN} \log
\meas{\TN})$ operations, the system~\eqref{eq:final_system} can be efficiently
solved by iterative methods. In fact, the next lemma clarifies that the standard
conjugate gradient algorithm~\cite{Hestenes:1952:MCG} works well, even though
the system matrix is non-symmetric.

\begin{lemma}
System~\eqref{eq:final_system} can be solved by the conjugate gradient algorithm
for an arbitrary initial solution $\MBefl_{\TN,(0)} \in \xEN$.
\end{lemma}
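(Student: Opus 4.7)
The plan is to show that, although the matrix $\OGDo\MBA_\TN$ in~\eqref{eq:final_system} is non-symmetric when regarded as an operator on the full space $\xRdN$, the CG iterates starting from $\MBefl_{\TN,(0)}\in\xEN$ never leave the subspace $\xEN$, on which $\OGDo\MBA_\TN$ coincides with a symmetric positive definite operator with respect to the standard scalar product on $\xRdN$.

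The first step is the invariance of $\xEN$ under the CG recursion. By \Lref{lem:FD_projection}(iii)--(v), $\OGDo$ is an orthogonal projection onto $\xEN$, so $\OGDo\MBA_\TN \MBu \in \xEN$ for every $\MBu\in\xRdN$; in particular the right-hand side $-\OGDo\MBA_\TN\MBE_\TN$ belongs to $\xEN$. A straightforward induction on the standard CG formulae (residual, search direction, update) then shows that all residuals $\MBr_{(k)}$, directions $\MBp_{(k)}$, and approximations $\MBefl_{\TN,(k)}$ remain in $\xEN$ provided $\MBefl_{\TN,(0)} \in \xEN$.

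The second step is to verify that $\OGDo\MBA_\TN$ acts as a symmetric positive definite operator on the subspace $\xEN$ equipped with the inherited scalar product $\sprod{\cdot}{\cdot}{\xRdN}$. For $\MBu,\MBv\in\xEN$, I use self-adjointness of $\OGDo$ from \Lref{lem:FD_projection}(v) together with $\OGDo\MBv=\MBv$ and $\OGDo\MBu=\MBu$ to obtain
\begin{align*}
\sprod{\OGDo\MBA_\TN\MBu}{\MBv}{\xRdN}
=
\sprod{\MBA_\TN\MBu}{\MBv}{\xRdN}
=
\sprod{\MBu}{\MBA_\TN\MBv}{\xRdN}
=
\sprod{\MBu}{\OGDo\MBA_\TN\MBv}{\xRdN},
\end{align*}
where the middle equality uses the pointwise symmetry of $\MBA_\TN$ induced by~\eqref{eq:A2}. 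The same manipulation combined with the lower bound in~\eqref{eq:A2} gives $\sprod{\OGDo\MBA_\TN\MBu}{\MBu}{\xRdN}=\sprod{\MBA_\TN\MBu}{\MBu}{\xRdN}\geq \cA\norm{\MBu}{\xRdN}^2$, so the operator is coercive on $\xEN$.

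The final step is then an appeal to the standard convergence theory for CG applied to a symmetric positive definite operator: since all the relevant CG quantities live in $\xEN$ and the denominators $\sprod{\MBp_{(k)}}{\OGDo\MBA_\TN\MBp_{(k)}}{\xRdN}$ are bounded away from zero by $\cA\norm{\MBp_{(k)}}{\xRdN}^2$, the algorithm is well defined and produces the unique solution in at most $\dim\xEN$ steps. I expect the main conceptual obstacle is precisely the verification in the second step, namely the observation that on $\xEN$ the non-symmetric composition $\OGDo\MBA_\TN$ inherits the symmetry of $\MBA_\TN$ via the orthogonal projection identity $\OGDo|_{\xEN}=\mathrm{Id}$; everything else is bookkeeping.
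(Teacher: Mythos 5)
Your proof is correct and follows essentially the same route as the paper: both arguments hinge on Lemma~\ref{lem:FD_projection}(iii) and~(v), namely that $\OGDo$ is a self-adjoint projection which keeps the Krylov data inside $\xEN$ and acts as the identity there, so that the Galerkin/CG conditions reduce to those for the symmetric positive-definite $\MBA_\TN$. The only difference is presentational — you track the explicit CG recursion by induction, while the paper invokes the abstract orthogonal-projection-method framework on Krylov subspaces — but the substance is identical.
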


\begin{proof}
The proof relies on the fact that the conjugate gradient algorithm is a
special instance of the orthogonal projection method for symmetric and
positive-definite system matrices~\cite[Section~6.7]{Saad:2003:IMSL}. To this
goal, define the $i$-th Krylov subspace as
\begin{align*}
\set{K}_{(i)} 
=
\mathrm{span}
\Bigl\{ 
  \MB{r}_{\TN,(0)}, 
  \OGDo \MBA_\TN \MB{r}_{\TN,(0)}, 
  \ldots, 
  (\OGDo\MBA_\TN)^{i-1} \MB{r}_{\TN,(0)} 
\Bigr\},
\end{align*}
where the residual vector corresponding to the initial guess is given
by
\begin{align*}
\MB{r}_{(0)} 
= 
\OGDo \MBA_\TN 
\left( \MBefl_{\TN,(0)} + \MBE_\TN \right).
\end{align*}
%

Due to involvement of structured matrix $\OGDo$, inclusions
$\set{K}_{(i)}\subset \xEN$
 hold for all $i$, cf.~\Lref{lem:FD_projection}(iii).
The $i$-th iterate of the orthogonal projection method is searched in the form
$\MBefl_{\TN,(i)} = \MBefl_{\TN,(0)} + \MBu_{\TN,(i)}$, with
$\MBu_{\TN,(i)} \in \set{K}_{(i)}$ satisfying, e.g.~\cite[Section~6.4]{Saad:2003:IMSL},
\begin{align*}
\sprod{%
\OGDo \MBA_\TN\MBu_{\TN,(i)}
}{\MBv_\TN}{\xRdN}
= 
-
\sprod{%
\OGDo \MBA_\TN (\MBE_\TN + \MBefl_{\TN,(0)})
}{\MBv_\TN}{\xRdN}
\text{ for all }
\MBv_\TN \in \set{K}_{(i)}.
\end{align*}
Due to self-adjointness of $\OGDo$ and its $\xEN$-invariance,
\Lref{lem:FD_projection}(iii) and~(v), this is equivalent to 
\begin{align}\label{eq:CG_identity}
\sprod{%
\MBA_\TN\MBu_{\TN,(i)}
}{\MBv_\TN}{\xRdN}
= 
-
\sprod{%
\MBA_\TN (\MBE_\TN + \MBefl_{\TN,(0)})
}{\MBv_\TN}{\xRdN}
\text{ for all }
\MBv_\TN \in \set{K}_{(i)}.
\end{align}
As $\MBA_\TN$ is symmetric and positive-definite, the previous relation
represents convergent iterations of the conjugate gradient
method~\cite[Section~6.7]{Saad:2003:IMSL}. Moreover, since $\MBefl_{\TN,(0)}
\in \xEN$, all iterates $\MBefl_{\TN,(i)}$ remain in $\xEN$.
\end{proof}

Several comments are now in order to clarify the relevance of the presented
results to the original Moulinec-Suquet scheme~\cite{Moulinec:1994:FNMC} and to
our computational experiments~\cite{ZeVoNoMa:2010:AFFTH}, both related to the
discrete Lippmann-Schwinger equation~\eqref{eq:LSD}. First note that the
Moulinec-Suquet method consists of solving~\eqref{eq:LSD} with the Neumann
series expansion
\begin{align*}  
\MBe_{\TN,(i)}
=
\sum_{j=0}^{i}
\left( - \MBGamm (\MBA_\TN - \MBA\refe) \right)^j \MBE_\TN
=
-
\MBGamm (\MBA_\TN - \MBA\refe)
\MBe_{\TN,(i-1)}
+
\MBE_\TN,
\end{align*}
the convergence of which depends on the choice of $\TAref$ and the number of
iterators needed to reach a given tolerance increases linearly with the contrast
in coefficients $\rA$ since, for the optimal choice of $\TAref$, 
\begin{align*}
\norm{\MBe_{\TN,(i)} - \MBe_\TN}{\xRdN}
\leq 
C 
\left( \frac{\rA-1}{\rA+1} \right)^i
\norm{\MBe_{\TN,(i)} - \MBe_{\TN,(0)}}{\xRdN},
\end{align*}
see e.g.~\cite{Eyre:1999:FNS} or \cite[Section~4.2.1]{Saad:2003:IMSL}.

Second, it follows from the previous proof that system~\eqref{eq:LSD} can be
solved by the conjugate gradient method for any $\TAref = \lambda \T{I}$, since
$\MBu_{\TN,(i)}$ in~\eqref{eq:CG_identity} can be transferred to the equivalent
solution of discrete Lippmann-Schwinger equation on $\set{K}_{(i)}$,
repeating in verbatim the proof of \Pref{prop:equivalence},
cf.~\cite{VoZeMa:2012:LNSC}.

Third, when either of
systems~\eqref{eq:final_system} or~\eqref{eq:LSD} is resolved by the conjugate
gradient method, the number of iterations needed for a given tolerance grows
as $\sqrt{\rA}$. This follows from the fact that, due to involvement of
the projection $\MBG_\TN$, the iterates never leave $\xEN$ and the condition
number of the system matrix in~\eqref{eq:CG_identity}  satisfies
$\kappa(\MBA_\TN) = \rA$. Thus, a well-known result of the convergence
analysis of the conjugate gradient method,
e.g.~\cite[Section~6.11.3]{Saad:2003:IMSL}, implies that  
\begin{align*}
\norm{\MBefl_{\TN,(i)} - \MBefl_\TN}{\xRdN}
\leq 
C 
\left( \frac{\sqrt{\rA}-1}{\sqrt{\rA}+1} \right)^i
\norm{\MBefl_{\TN,(i)} - \MBefl_{\TN,(0)}}{\xRdN},
\end{align*}
see also~\cite[Section~3.2]{ZeVoNoMa:2010:AFFTH} for further discussion.

Finally, we wish to emphasize that the results of the present section are
supported by simulation performed in two~\cite{ZeVoNoMa:2010:AFFTH} and
three~\cite{VoZeMa:2012:LNSC} dimensions.

\section{Conclusions}\label{sec:conclusions}
%
In this paper, we have introduced a Galerkin framework for the discretization of
the cell problem arising in periodic homogenization theories. Our approach
builds on a finite-dimensional approximation space formed by trigonometric
polynomials, and on a suitable projection operator reflecting the differential
constraints in the problem formulation. In the scalar elliptic setting, we have
demonstrated that

\begin{itemize}
  \item trigonometric polynomials provide a transparent way to
  constructing conformal structure-preserving approximations to
  infinite-dimensional curl-free spaces~\eqref{eq:E_def},
  \item solutions to the discretized problems~(with or without numerical
  integration) converge to the weak solution, with standard rates of
  convergence for sufficiently regular data,
  \item the Galerkin method with numerical integration~\eqref{eq:GaNi} is
  equivalent to the discrete Lippmann-Schwinger equation forming the basis of
  the original\linebreak Moulinec-Suquet scheme~\cite{Moulinec:1994:FNMC},
  \item the non-symmetric linear system arising from~\eqref{eq:GaNi} is
  independent of the auxiliary parameter $\TAref$ and can be solved by the
  conjugate gradient method.
\end{itemize} 

Apart from completely explaining our earlier
observations~\cite{ZeVoNoMa:2010:AFFTH}, we believe that the presented
results provide a convenient starting point for several interesting extensions.
First, utilizing the Helmholtz decomposition~\eqref{eq:Helmholtz} and its
discrete variant~\eqref{eq:Helmholtz_discrete}, the dual formulation
of~\eqref{eq:weak_solution} can be solved in completely analogous way to provide
computable and reliable a-posteriori error estimates for the approximate
solutions. Such results have already been announced in the Ph.D. thesis of the first author, cf.~\cite[pp.~121--148]{Vondrejc:2013:FFT}, and independently in \cite{VoZeMa:2013:GarBounds}. Second, since our approach relies on the well-established concept
of the weak solution, it might provide a unifying basis to establish connections
among various refinements of the original scheme briefly discussed in
\Sref{sec:Introduction}. Third, performance of e.g.
multi-grid~\cite{Eyre:1999:FNS} or stochastic~\cite{Xu:2005:SCM} solvers can be
significantly improved by variational techniques developed in this work.
Finally, we may proceed beyond the scalar setting to more complex physical
phenomena, or to modeling of real-world material systems. We plan to explore
some of these possibilities in future investigations.

\section{Comparison with results by Brisard and
Dormieux~\cite{Brisard:2012:CGA}}\label{sec:Brisard}

As already stated in the introductory section, this is not the first paper to
interpret the Moulinec-Suquet method as a Galerkin scheme. To the best of our
knowledge, such connection was first made by Brisard and
Dormieux~\cite{Brisard:2010:FFT} in 2010 for linear elasticity, and was later
refined by convergence analysis~\cite{Brisard:2012:CGA}. Here we briefly
comment on the differences between their developments and the results
presented here.

The approach taken by the authors
of~\cite{Brisard:2010:FFT,Brisard:2012:CGA} proceeds from the discretization of
stationarity conditions to the Hashin-Shtrikman
functional~\cite{Hashin:1962:SMP}, expressed in terms of an unknown polarization
field $\T{\tau} \in \Lper{2}{\xRd}$ as
\begin{align}\label{eq:HS_stat}
\sprod{(\TA - \TAref)^{-1}\T{\tau}}{\Tv}{\Lper{2}{\xRd}} 
+
\sprod{\T{\Gamma}_0\T{\tau}}{\Tv}{\Lper{2}{\xRd}} 
=
\scal{\TE}{\Tv}_{\Lper{2}{\xRd}}
\end{align}
for all $\Tv\in\Lper{2}{\xRd}$, which is equivalent to the Lippmann-Schwinger
equation~\eqref{eq:integral_solution} with $\T{\tau} = (\TA-\TA\refe)\Te$. Since
the polarization field is sought in the whole space $\Lper{2}{\xRd}$, instead of
the subspace $\E$ of zero mean curl-free functions as
in~\eqref{eq:weak_solution}, the approximation space consists of pixel- or
voxel-wise constant fields and~\eqref{eq:HS_stat} can be localized to individual
pixels/voxels. Therefore, the crucial step consists in the evaluation of the
term
\begin{align}
\label{eq:B-D_term}
\int_{\puc}
\TGref(\x - \y)
\T{\tau}(\y)
\de \y
=
\sum_{\k \in \Zd}
\ThGref(\k)
\FT{\T{\tau}}(\k)
\bfun{\k}(\x),
\end{align}
representing the negative value of the fluctuating gradient field $-\Tefl \in
\E$, cf.~\eqref{eq:Lippman-Schwinger} and~\eqref{eq:Gamma_action}.

Similarly to our work, two different approximations are considered. The first
one relies on the so-called consistent Green operator, for which the sum
in~\eqref{eq:B-D_term} is computed exactly,
cf.~\Sref{sec:conforming_Galerkin}. In the non-consistent case, the infinite sum
is truncated to $\k \in \ZNd$, with an effect comparable to the numerical
integration in \Sref{sec:non_conforming_Galerkin}. Convergence of the
approximate solutions is proven in an analogous manner to the present
work~\cite{Brisard:2012:CGA}, but no a-priori estimates on the rate of
convergence are provided.

Albeit the underlying ideas and mathematical instruments used in both approaches
are similar, they lead to different schemes. In particular, in order to
employ the consistent Green operator, one needs to evaluate the lattice sums
in~\eqref{eq:B-D_term} to a high accuracy, which is rather difficult (especially
in the three-dimensional setting). On the other hand, truncating the sum
in~\eqref{eq:B-D_term} generates errors arising from the numerical integration,
and produces non-conforming gradient fields $-\Tefl \not \in \E$, which implies
that the discrete Helmholtz decomposition property~\eqref{eq:trig_Helmholtz} is
no longer valid. Finally, since the actual status of the stationary point
in~\eqref{eq:HS_stat}, i.e. minimizer, maximizer or saddle point, depends
on the choice of $\TAref$, the matrix of the resulting system of linear
equations till depends on $\TAref$ and can be either positive-definite,
negative-definite or indefinite. As a result, more complex iterative solvers
need to be employed~\cite{Brisard:2012:CGA}.

\appendix
\section{Approximation by trigonometric polynomials}\label{app:trigonometric}
Results presented in this section are (rather straightforward) generalizations
of  Lemma~8.5.1 and Theorem~8.5.3 from~\cite{Saranen:2002:PIP}, valid in
two dimensions, to the multi-dimensional vector setting with different grid
spacings $h_\alp$.

\begin{proof}[Proof of \Lref{lem:approximation}]
Convergence in~\eqref{eq:conv_PN} is a consequence of the density of the set
of trigonometric polynomials $\{\bfun{\k}\}_{\k\in\Zd}$ in
$\Lper{2}{}$, e.g.~\cite[pp.~89--91]{rudin1986real}. 

As of~\eqref{eq:rate_of_conv_PN}, combining~\eqref{eq:sobolev_norm_Hs}
with~\eqref{eq:PN_def} reveals that
\begin{align*}
\norm{\Tu - \PN{\Tu}}{\Hper{r}{\xRd}}^2
& = 
\sum_{\k\in\Zd\setminus\ZNd} 
\norm{\Tz(\k)}{\xRd}^2
\norm{\FT{\Tu}(\k)}{\xCd}^2
\\
& = 
\sum_{\k\in\Zd\setminus\ZNd} 
\norm{\Tz(\k)}{\xRd}^{2(r-s)}
\norm{\Tz(\k)}{\xRd}^{2s}
\norm{\FT{\Tu}(\k)}{\xCd}^2
\\
& \leq 
\Ch^{2(s-r)}
\sum_{\k\in\Zd\setminus\ZNd} 
\norm{\Tz(\k)}{\xRd}^{2s}
\norm{\FT{\Tu}(\k)}{\xCd}^2
\leq 
\Ch^{2(s-r)}
\norm{\Tu}{\Hper{s}{\xRd}}^2.
\end{align*}

In order to prove~\eqref{eq:rate_of_conv_QN}, we first establish the Fourier
representation of operator $\QN{}$ for $\Tu \in \Hper{s}{\xRd}$ with $s >
\dime/2$ in the form
\begin{align}\label{eq:QN_Fourier}
\QN{\Tu}(\x)
=
\sum_{\k\in\ZNd}
\Bigl[
  \sum_{\m \in \Zd}
  \FT{\Tu}(\k+\m\odot\TN)
\Bigr]
\bfun{\k}(\x)
\text{ for }
\x \in \puc,
\end{align}
where $\odot$ denotes the element-by-element multiplication. Indeed, since
$\QN{}$ is a projection on $\TrigN{}$, we have\footnote{The operator $\QN{}$ is
applied to scalar functions in the same way as in~\eqref{eq:QN_def}.}
$\QN{\bfun{\k}} = \bfun{\k}$ for any $\k \in \ZNd$. Moreover,
\begin{align*}
\bfun{\k+\m \odot\TN}(\x_\TN^\Tl)
=
\varphi_{\k}(\x_\TN^\Tl)
\exp\left(2\imu\pi \sprod{\Tl}{\m}{\xRd} \right)
=
\bfun{\k}(\x_\TN^\Tl),
\end{align*}
for any $\k, \Tl \in \ZNd$ and $\m \in \Zd$, so that 
\begin{align*}
\QN{\bfun{\k+ \m\odot\TN}}
=
\QN{\varphi_{\k}}
=
\varphi_{\k}.
\end{align*}
Since $\QN{}$ is a linear operator, we arrive at
\begin{align*}
\QN{\Tu}
& =
\QN{%
\sum_{\k \in \Zd} \FT{\Tu}(\k)
\bfun{\k}
}
=
\QN{%
\sum_{\k \in \ZNd} 
\sum_{\m \in \Zd}
\FT{\Tu}(\k + \m \odot \TN)
\bfun{\k + \m \odot \TN}
}
\\
& =
\sum_{\k \in \ZNd} 
\sum_{\m \in \Zd}
\FT{\Tu}(\k + \m \odot \TN)
\bfun{\k}.
\end{align*}

Now we proceed to the last part of the proof. Orthogonality of
$\PN{}$ entails that~\cite[Theorem~4.11]{rudin1986real}
\begin{align*}
\norm{\Tu - \QN{\Tu}}{\Hper{r}{\xRd}}^2
=
\norm{\Tu - \PN{\Tu}}{\Hper{r}{\xRd}}^2
+
\norm{\QN{\Tu} - \PN{\Tu}}{\Hper{r}{\xRd}}^2,
\end{align*}
where the first term is controlled by~\eqref{eq:rate_of_conv_PN}, and the latter
one can be estimated by combining~\eqref{eq:sobolev_norm_Hs}, \eqref{eq:QN_Fourier}, \eqref{eq:PN_def}, and the Cauchy inequality as
\begin{align*}
& \norm{\QN{\Tu} - \PN{\Tu}}{\Hper{r}{\xRd}}^2
\\
&=
\sum_{\k\in\ZNd} \norm{\underline{\Tz}(\k)}{\xRd}^{2r}
\norm{
\sum_{\m \in \ZdmO}
\FT{\Tu}(\k + \m \odot \TN)}{\xCd}^{2}
\\
&\leq
\sum_{\k\in\ZNd}
\left( 
  \sum_{\m \in \ZdmO} 
  \frac{%
    \norm{\underline{\Tz}(\k)}{\xRd}^r}{%
    \norm{\Tz(\k + \m \odot \TN)}{\xRd}^s
  }
\norm{\Tz(\k + \m \odot \TN)}{\xRd}^s
\norm{\FT{\Tu}(\k + \m \odot \TN)}{\xCd}
\right)^2 
\\
&\leq
\sum_{\k \in \ZNd}
\left( 
  \sum_{\m \in \ZdmO} 
  \frac{%
    \norm{\underline{\Tz}(\k)}{\xRd}^{2r}}{%
    \norm{\Tz(\k + \m \odot \TN)}{\xRd}^{2s}
  }
\right)
\times
\\
& \quad
\left( 
  \sum_{\m \in \ZdmO}
  \norm{\Tz(\k + \m \odot \TN)}{\xRd}^{2s}
  \norm{\FT{\Tu}(\k + \m \odot \TN)}{\xCd}^2  
\right)
\\
&\leq
\varepsilon^2_\TN \norm{\Tu}{\Hper{s}{\xRd}}^2.
\end{align*}
The constant is provided by 
\begin{align*}
\varepsilon^2_\TN
& = 
\max_{\k\in\ZNd}
\left(
  \sum_{\m \in \ZdmO} 
  \frac{%
    \norm{\underline{\Tz}(\k)}{\xRd}^{2r}}{%
    \norm{\Tz(\k + \m \odot \TN)}{\xRd}^{2s}
  }
\right)
\\
&
\leq \dime^{r} \ch^{-2r}
\max_{\k\in\ZNd}
\left( 
  \sum_{\m \in \ZdmO} 
  \left[ 
    \sum_{\alp}
    (\frac{N_{\alp}}{2Y_{\alp}})^2
    |\frac{2k_{\alp}}{N_{\alp}} + 2m_{\alp}|^2 
  \right]^{-s} 
\right)
\\
& \leq 
\dime^{r} \ch^{-2r} \Ch^{2s}
\max_{\k\in\ZNd}
\left( 
  \sum_{\m \in \ZdmO} 
  \left[ 
    \sum_{\alp} |\frac{2k_{\alp}}{N_{\alp}} + 2m_{\alp}|^2 
  \right]^{-s} 
\right)
\\
& =
\dime^{r} \rh^{-2r} \Ch^{2(s-r)}
\max_{\k\in\ZNd}
\left( 
  \sum_{\m \in \ZdmO} 
  \left[ 
    \sum_{\alp} |\frac{2|k_{\alp}|}{N_{\alp}} + 2m_{\alp}|^2 
  \right]^{-s} 
\right)
\\
& \leq 
\dime^{r} \rh^{-2r} \Ch^{2(s-r)}
\left( 
  \sum_{\m \in \NOmO} 
  \norm{\m}{\xRd}^{-2s}
\right),
\end{align*}
where the last estimate generalizes the one-dimensional version
from~\cite[p.~243]{Saranen:2002:PIP} to

\begin{align*}
\max_{\k\in\ZNd}
  \sum_{\m \in \ZdmO} 
  \left[ 
    \sum_{\alp} |\frac{2|k_{\alp}|}{N_{\alp}} + 2m_{\alp}|^2 
  \right]^{-s} 
&
\leq \sum_{\m \in \ZdmO} 
\max_{\k\in\ZNd}
  \left[ 
    \sum_{\alp} |\frac{2|k_{\alp}|}{N_{\alp}} + 2m_{\alp}|^2 
  \right]^{-s} 
\\
&= \sum_{\m \in \ZdmO} \left[ 
    \sum_{\alp} |H(m_\alp) + 2m_{\alp}|^2 
  \right]^{-s}
   \\
  &= \sum_{\m \in \NOmO } 
  \norm{\m}{\xRd}^{-2s}
\end{align*}
with the Heaviside-like function defined as 
\begin{align*}
H(m_\alp) = 
\begin{cases}
0&\text{for }m_\alp\geq 0,
\\
1&\text{for }m_\alp < 0.
\end{cases}
\end{align*}
Noticing that the above sum is convergent for $s > \dime/2$, we obtain
\begin{align*}
\norm{\QN{\Tu} - \PN{\Tu}}{\Hper{r}{\xRd}}^2
\leq
\dime^{r} \rh^{-2r} \Ch^{2(s-r)}
\left( 
  \sum_{\m \in\NOmO} 
  \norm{\m}{\xRd}^{-2s}
\right)
\norm{\Tu}{\Hper{s}{\xRd}},
\end{align*}
and the proof of \Lref{lem:approximation} now follows directly
from~\eqref{eq:rate_of_conv_PN}.
\end{proof}

\section{Regularity result}\label{app:regularity}
In order to justify the requirements on the weak solution to the periodic
cell problem, needed to establish the rate of convergence of Galerkin
approximations in \Sref{sec:Discretization}, in this section we collect
basic regularity results. To this purpose, we employ the well-known techniques
based on difference quotients, e.g. Theorem~3 of Section~5.8.2 and Theorem 1 of
Section~6.3.1 in \cite{Evans:2000:PDE}, simplified due to the periodic setting.
Moreover, to keep the exposition compact, we treat only the case $\TA \in
\Wper{1}{\infty}{\xRdd}$. The general statement, i.e. $\TA \in
\Wper{s}{\infty}{\xRdd}$ with $s \in \set{N}$ implies that $\Tefl \in
\Hper{s}{\xRd}$, follows by induction; a proof based on Theorem~2 of
Section~6.3.1 in \cite{Evans:2000:PDE} is available in
\cite[pp.~113--114]{Vondrejc:2013:FFT}.
 
In particular, the $\alp$-th difference coefficient of a function $\Tf \in
\Lper{2}{\X}$ with the step $\step \in \R$ is provided by
\begin{align*}
\DQf{\alp} \Tf(\x)
=
\frac{\Tf(\x + \step \Tunv{\alp}) - \Tf(\x)}{\step}
\text{ for }
\x \in \puc,
\end{align*}
where $\Tunv{\alp}=(\del_{\alp\beta})$. The following result summarizes the
relation between difference quotients and periodic Sobolev functions.

\begin{lemma}\label{lem:diff_sobolev}
  Assume that $\Tu \in \Lper{2}{\xRd}$ and that there exist $C$ independent of
  $\step$ and $\alp$ such that $\norm{\DQf{\alp}\Tu}{\Lper{2}{\xRd}} \leq C$.
  Then $\Tu \in \Hper{1}{\xRd}$.
\end{lemma}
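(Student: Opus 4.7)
The plan is to work directly in the Fourier domain, using the characterization of $\Hper{1}{\xRd}$ provided by~\eqref{eq:sobolev_norm_Hs}. Since $\Tu \in \Lper{2}{\xRd}$, we can expand $\Tu = \sum_{\k \in \Zd} \FT{\Tu}(\k) \bfun{\k}$, and the key observation is that translation acts diagonally on the Fourier basis: $\bfun{\k}(\x + \step \Tunv{\alp}) = \exp(\imu \pi \step \z_\alp(\k)) \bfun{\k}(\x)$. Consequently, the difference quotient admits the Fourier representation
\begin{align*}
\DQf{\alp} \Tu(\x)
=
\sum_{\k \in \Zd}
\frac{\exp(\imu \pi \step \z_\alp(\k)) - 1}{\step}
\FT{\Tu}(\k) \bfun{\k}(\x).
\end{align*}

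Applying Parseval's identity~\eqref{eq:Plancherel} together with the hypothesis yields, for every admissible $\step \neq 0$ and every $\alp$,
\begin{align*}
\sum_{\k \in \Zd}
\left| \frac{\exp(\imu \pi \step \z_\alp(\k)) - 1}{\step} \right|^2
\norm{\FT{\Tu}(\k)}{\xCd}^2
=
\norm{\DQf{\alp}\Tu}{\Lper{2}{\xRd}}^2
\leq
C^2.
\end{align*}
I would then pick a sequence $\step_n \to 0$. The pointwise limit of each factor is $\pi^2 \z_\alp(\k)^2$, because $(\exp(\imu t) - 1)/t \to \imu$ as $t \to 0$. Since all terms are non-negative, Fatou's lemma applied to the counting measure on $\Zd$ gives
\begin{align*}
\sum_{\k \in \Zd}
\pi^2 \z_\alp(\k)^2
\norm{\FT{\Tu}(\k)}{\xCd}^2
\leq
\liminf_{n \to \infty}
\sum_{\k \in \Zd}
\left| \frac{\exp(\imu \pi \step_n \z_\alp(\k)) - 1}{\step_n} \right|^2
\norm{\FT{\Tu}(\k)}{\xCd}^2
\leq
C^2.
\end{align*}

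Summing this bound over $\alp = 1,\dots,\dime$ produces
\begin{align*}
\sum_{\k \in \Zd}
\norm{\Tz(\k)}{\xRd}^2
\norm{\FT{\Tu}(\k)}{\xCd}^2
\leq
\frac{\dime C^2}{\pi^2}.
\end{align*}
Since $\ul{\Tz}(\k) = \Tz(\k)$ on $\ZdmO$ and $\norm{\ul{\Tz}(\T{0})}{\xRd}^2 \norm{\FT{\Tu}(\T{0})}{\xCd}^2 \leq \norm{\Tu}{\Lper{2}{\xRd}}^2$, recalling~\eqref{eq:sobolev_norm_Hs} with $s = 1$ gives $\norm{\Tu}{\Hper{1}{\xRd}}^2 < \infty$, hence $\Tu \in \Hper{1}{\xRd}$.

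The only delicate step is the limit passage in the infinite sum, but this is immediate from Fatou's lemma since every summand is non-negative; no uniform integrability is needed. All other manipulations are direct consequences of the Fourier-analytic definitions recalled in the notation section.
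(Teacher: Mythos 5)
Your proof is correct, but it takes a genuinely different route from the paper. The paper follows the classical real-variable argument (Evans-style): it invokes the integration-by-parts identity for difference quotients, uses the uniform $L^2$ bound to extract a weakly convergent subsequence of the difference quotients, and identifies the weak limit with the weak derivative. You instead work entirely on the Fourier side: translation acts diagonally on the basis $\{\bfun{\k}\}$, so the hypothesis becomes a uniform bound on $\sum_{\k}\bigl|(\exp(\imu\pi\step\z_\alp(\k))-1)/\step\bigr|^2\norm{\FT{\Tu}(\k)}{\xCd}^2$, and Fatou's lemma for the counting measure lets you pass to the limit $\step\to 0$ term by term, yielding $\sum_{\k}\pi^2\z_\alp(\k)^2\norm{\FT{\Tu}(\k)}{\xCd}^2\leq C^2$ for each $\alp$ and hence finiteness of the $\Hper{1}{\xRd}$ norm via \eqref{eq:sobolev_norm_Hs}. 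Your approach avoids weak compactness entirely and is arguably better matched to this paper, whose whole framework is built on Fourier series; it also delivers the sharp quantitative bound $\norm{\D{\alp}\Tu}{\Lper{2}{\xRd}}\leq C$ for free. The trade-off is that it is tied to the periodic/Fourier setting, whereas the difference-quotient argument of the paper transfers to general domains and is the standard tool in the elliptic-regularity literature the authors cite. One trivial slip: $\norm{\ul{\Txi}(\T{0})}{\xRd}^2=\dime$, so the $\k=\T{0}$ term is bounded by $\dime\,\norm{\FT{\Tu}(\T{0})}{\xCd}^2$ rather than $\norm{\Tu}{\Lper{2}{\xRd}}^2$; this does not affect the conclusion.
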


\begin{proof} The demonstration rests on the integration by parts
formula for difference quotients
\begin{align*}
\sprod{\Tu}{\DQf{\alp}\Tu}{\Lper{2}{\xRd}}
=
-
\sprod{\DQb{\alp}\Tu}{\Tu}{\Lper{2}{\xRd}}
\end{align*}
valid for all $\Tu \in \Hper{1}{\xRd}$. Since $\DQf{\alp}\Tu$ is uniformly
bounded in $\Lper{2}{\xRd}$, we can extract a subsequence~(not relabeled) such
that $\DQb{\alp} \Tu \rightharpoonup \Tv$ weakly in $\Lper{2}{\xRd}$ as $\step
\rightarrow 0$. Therefore,
\begin{align*}
\sprod{\Tu}{\D{\alp}\Tu}{\Lper{2}{\xRd}}
&=
\lim_{h \rightarrow 0}
\sprod{\Tu}{\DQf{\alp}\Tu}{\Lper{2}{\xRd}}
\\
&=
- 
\lim_{h \rightarrow 0}
\sprod{\DQb{\alp}\Tu}{\Tu}{\Lper{2}{\xRd}}
=
\sprod{\Tv}{\Tu}{\Lper{2}{\xRd}},
\end{align*}
so that $\Tv = \D{\alp}\Tu$, $\nabla \Tu \in \Lper{2}{\xRdd}$ and consequently
$\Tu \in \Hper{1}{\xRd}$.
\end{proof}

\begin{lemma}\label{lem:regularity_result}
Let $\TA \in \Wper{1}{\infty}{\xRdd}$ satisfy~\eqref{eq:A2} and $\Tefl \in
\Lper{2}{\xRd}$ be the weak solution to the cell
problem~\eqref{eq:weak_solution}. Then, $\Tefl \in \Hper{1}{\xRd}$ and we
have
\begin{align}\label{eq:a-priory}
\norm{\Tefl}{\Hper{1}{\xRd}}
\leq
\frac{1 + \rA}{\cA}
\norm{\TA}{\Wper{1}{\infty}{\xRdd}}
\norm{\TE}{\xRd}.
\end{align}
\end{lemma}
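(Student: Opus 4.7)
The plan is to follow the standard difference-quotient technique, already outlined in the passage preceding \Lref{lem:diff_sobolev}, and to feed its output into that lemma. Fix $\alp \in \{1,\dots,\dime\}$ and a nonzero step $\step$, and test the weak formulation \eqref{eq:weak_solution} with
\begin{align*}
\Tv = -\DQb{\alp}\DQf{\alp}\Tefl.
\end{align*}
I first verify that $\Tv \in \E$: the shift $\x \mapsto \x + \step\Tunv{\alp}$ is a rigid translation that preserves $\puc$-periodicity, commutes with the distributional curl \eqref{eq:curl_def}, and preserves the zero-mean condition, so $\DQf{\alp}$ (and likewise $\DQb{\alp}$) leaves $\E$ invariant. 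Using $\bform{\Tefl}{\Tv} - \lform{\Tv} = \sprod{\TA(\Tefl+\TE)}{\Tv}{\Lper{2}{\xRd}} = 0$ together with the periodic discrete integration-by-parts identity
\begin{align*}
\sprod{\Tf}{\DQb{\alp}\Tg}{\Lper{2}{\xRd}} = -\sprod{\DQf{\alp}\Tf}{\Tg}{\Lper{2}{\xRd}},
\end{align*}
which is immediate from a change of variables, transforms the vanishing identity into
\begin{align*}
\sprod{\DQf{\alp}[\TA(\Tefl+\TE)]}{\DQf{\alp}\Tefl}{\Lper{2}{\xRd}} = 0.
\end{align*}

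Next I apply the discrete Leibniz rule
\begin{align*}
\DQf{\alp}[\TA(\Tefl+\TE)](\x) = \TA(\x+\step\Tunv{\alp})\,\DQf{\alp}\Tefl(\x) + \DQf{\alp}\TA(\x)\,\bigl(\Tefl(\x)+\TE\bigr),
\end{align*}
which splits the equation into a principal quadratic term in $\DQf{\alp}\Tefl$ and a perturbation carrying $\DQf{\alp}\TA$. Ellipticity \eqref{eq:A2} applied at the shifted argument bounds the principal term from below by $\cA\norm{\DQf{\alp}\Tefl}{\Lper{2}{\xRd}}^2$; Cauchy-Schwarz together with the pointwise bound $\norm{\DQf{\alp}\TA}{\Lper{\infty}{\xRdd}} \leq \norm{\TA}{\Wper{1}{\infty}{\xRdd}}$ controls the remainder, yielding
\begin{align*}
\cA\,\norm{\DQf{\alp}\Tefl}{\Lper{2}{\xRd}}^2 \leq \norm{\TA}{\Wper{1}{\infty}{\xRdd}}\bigl(\norm{\Tefl}{\Lper{2}{\xRd}} + \norm{\TE}{\xRd}\bigr)\norm{\DQf{\alp}\Tefl}{\Lper{2}{\xRd}}.
\end{align*}
An a-priori $L^2$ bound $\norm{\Tefl}{\Lper{2}{\xRd}} \leq \rA \norm{\TE}{\xRd}$, obtained by testing \eqref{eq:weak_solution} against $\Tefl$ itself and combining the two estimates in \eqref{eq:form_estimates}, turns this into
\begin{align*}
\norm{\DQf{\alp}\Tefl}{\Lper{2}{\xRd}} \leq \frac{1+\rA}{\cA}\,\norm{\TA}{\Wper{1}{\infty}{\xRdd}}\,\norm{\TE}{\xRd},
\end{align*}
uniformly in $\step$ and $\alp$. \Lref{lem:diff_sobolev} then places $\Tefl$ in $\Hper{1}{\xRd}$, and passing to the weak limit $\step \to 0$ in the bound above transfers it to $\norm{\D{\alp}\Tefl}{\Lper{2}{\xRd}}$, delivering \eqref{eq:a-priory} after summing over $\alp$ and recalling that $\Tefl$ has zero mean (so the $\k=\T{0}$ term in \eqref{eq:sobolev_norm_Hs} drops out).

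The two delicate points I expect are the admissibility of the test function and the asymmetry in the Leibniz rule. For admissibility, one must argue carefully that $\DQb{\alp}\DQf{\alp}\Tefl$ inherits both the curl-free and zero-mean properties from $\Tefl$; this is essentially obvious but must be stated with the weak curl \eqref{eq:curl_def} because $\Tefl$ itself is only in $\Lper{2}{\xRd}$. For the Leibniz rule, care is required that the coefficient multiplying the principal term is the \emph{shifted} tensor $\TA(\cdot+\step\Tunv{\alp})$, and one must check that \eqref{eq:A2} still applies after this shift — which it does, a.e.\ in $\puc$, thanks to periodicity.
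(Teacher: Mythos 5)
Your proposal is correct and follows essentially the same route as the paper's own proof: testing \eqref{eq:weak_solution} with $-\DQb{\alp}\DQf{\alp}\Tefl$, applying the discrete integration-by-parts and Leibniz identities, invoking ellipticity for the shifted coefficient, and feeding the uniform difference-quotient bound into \Lref{lem:diff_sobolev} together with the a-priori bound $\norm{\Tefl}{\Lper{2}{\xRd}} \leq \rA\norm{\TE}{\xRd}$. Your explicit verification that the test function lies in $\E$ is a welcome addition that the paper leaves implicit.
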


\begin{proof}
Test the formulation~\eqref{eq:weak_solution} with a function
$\Tv = -\DQb{\alp}(\DQf{\alp} \Tefl)$ to obtain
\begin{align}\label{eq:proof_B1}
-
\sprod{\TA\Tefl}{\DQb{\alp}(\DQf{\alp} \Tefl)}{\Lper{2}{\xRd}}
=
\sprod{\TA\TE}{\DQb{\alp}(\DQf{\alp} \Tefl)}{\Lper{2}{\xRd}}.
\end{align}
Utilizing simple relations
\begin{align*}
\sprod{\Tv}{\DQb{\alp}\Tu}{\Lper{2}{\xRd}}
& =
-
\sprod{\DQf{\alp}\Tv}{\Tu}{\Lper{2}{\xRd}},
&
\DQf{\alp}(\TA\Tu)
=
\TA^\step ( \DQf{\alp}\Tu )
+
\DQf{\alp}(\TA) \Tu,
\end{align*}
with $\TA^{\step,\alp}(\x) = \TA(\x+\step \Tunv{\alp})$
\eqref{eq:proof_B1} transfers to 
\begin{align*}
\sprod{\TA^\step \DQf{\alp}\Tefl}{\DQf{\alp}\Tefl}{\Lper{2}{\xRd}}
=
-
\sprod{\DQf{\alp}(\TA)\TE}{\DQf{\alp}{\Tefl}}{\Lper{2}{\xRd}} 
-
\sprod{\DQf{\alp}(\TA)\Tefl}{\DQf{\alp}\Tefl}{\Lper{2}{\xRd}}.
\end{align*}
Property~\eqref{eq:A1} and the H\"{o}lder inequality imply
\begin{align*}
\cA \norm{\DQf{\alp}\Tefl}{\Lper{2}{\xRd}}^2
&\leq 
\norm{\DQf{\alp}\TA}{\Lper{2}{\xRdd}} \left(
\norm{\TE}{\xRd}
+
\norm{\Tefl}{\Lper{2}{\xRd}}
\right)
\norm{\DQf{\alp}\Tefl}{\Lper{2}{\xRd}};
\end{align*}
since material coefficients $\TA \in \Wper{1}{\infty}{\xRdd}$ are Lipschitz
continuous, it holds
\begin{align*}
\cA \norm{\DQf{\alp}\Tefl}{\Lper{2}{\xRd}}
\leq 
\norm{\TA}{\Wper{1}{\infty}{\xRdd}}
\left(
\norm{\TE}{\xRd}
+
\norm{\Tefl}{\Lper{2}{\xRd}}
\right).
\end{align*}
The difference quotient $\DQf{\alp}\Tefl$ is thus bounded independently of
$\step$, as required by \Lref{lem:diff_sobolev}, so that $\Tefl \in
\Hper{1}{\xRd}$. The inequality~\eqref{eq:a-priory} now follows from standard
a-priory estimates on $\Tefl$, i.e. $\norm{\Tefl}{\Lper{2}{\xRd}} \leq \rA
\norm{\TE}{\xRd}$.
\end{proof}


\bibliography{liter}

\end{document}